\theoremstyle{plain}
\newtheorem{theorem}{Theorem}[section]
\newtheorem{cor}[theorem]{Corollary}
\newtheorem{prop}[theorem]{Proposition}
\theoremstyle{definition}
\newtheorem{remark}[theorem]{Remark}
\newtheorem{definition}[theorem]{Definition}
\newcommand{\N}{\mathbb N}
\newcommand{\Q}{\mathbb Q}
\newcommand{\R}{\mathbb R}
\newcommand{\dx}{\, \mathrm{d}x}
\newcommand{\dt}{\, \mathrm{d}t}
\newcommand{\ds}{\, \mathrm{d}s}
\newcommand{\dtt}{\frac{\mathrm{d}t}t}
\newcommand{\dss}{\frac{\mathrm{d}s}s}
\newcommand{\e}{\mathrm{e}}
\newcommand{\eps}{\varepsilon}
\newcommand{\M}{\mathfrak{M}}
\newcommand{\ceq}{\coloneq}
\renewcommand{\phi}{\varphi}
\newcommand{\f}{f^*}
\newcommand{\g}{g^*}
\newcommand{\ff}{f^{**}}
\newcommand{\vt}{:}
\newcommand{\Vt}{:}
\newcommand{\rX}{\overline X}
\newcommand{\jp}{\frac1p}
\newcommand{\cdn}{\|\cdot\|}
\newcommand{\Lpv}{\Lambda^p(v)}
\newcommand{\Lqw}{\Lambda^q(w)}
\newcommand{\Gqw}{\Gamma^q(w)}
\newcommand{\QQ}{\mathcal{Q}}
\newcommand{\tnorm}[1]{{\left\vert\kern-0.25ex\left\vert\kern-0.25ex\left\vert #1 
		\right\vert\kern-0.25ex\right\vert\kern-0.25ex\right\vert}}
\begin{document}

\title[Rearrangement-invariant norms commuting with dilations]{{Rearrangement-invariant norms commuting with dilations}}

\author[S.~Boza]{Santiago Boza}
\address{Department of  Mathematics, EETAC, Polytechnical University of Catalonia, 08860 Castelldefels, Spain.}
\email{santiago.boza@upc.edu}
\urladdr{\href{https://orcid.org/0000-0003-0234-1645}{0000-0003-4933-0919}}

\author[M.~K\v repela]{Martin K\v repela}
\address{Department of Mathematics, Faculty of Electrical Engineering, Czech Technical University in Prague, Technick\'a~2, 166~27 Praha~6, Czech Republic.}
\email{martin.krepela@fel.cvut.cz}
\urladdr{\href{https://orcid.org/0000-0003-0234-1645}{0000-0003-0234-1645}}

\author[J.~Soria]{Javier Soria}
\address{Department of Analysis and Applied Mathematics, Complutense University  of Madrid, 28040 Madrid, Spain and ICMAT.}
\email{javier.soria@ucm.es}
\urladdr{\href{https://orcid.org/0000-0003-3098-7056}{0000-0003-3098-7056}}

\thanks{S.~Boza was partially supported by grants PID2020-113048GB-I00, funded by MCIN/AEI/ 10.13039/501100011033, and by grant 2021SGR 00087.
M.~K\v repela was supported by the project GA23-04720S of the Czech Science Foundation.
J.~Soria  was partially supported by grants PID2020-113048GB-I00 and CEX2019-000904-S, funded by MCIN/AEI/ 10.13039/501100011033, and Grupo UCM-970966.}

\subjclass[2020]{46E30, 46B42}

%\date{\today}

\begin{abstract}
	We study rearrangement-invariant spaces $X$ over $[0,\infty)$ for which there exists a~function $h:(0,\infty)\to (0,\infty)$ such that
		\[
			\|D_rf\|_X = h(r)\|f\|_X
		\]
	for all $f\in X$ and all $r>0$, where $D_r$ is the dilation operator. It is shown that this may hold only if $h(r)=r^{-\frac1p}$ for all $r>0$, in which case the norm $\cdn_X$ is called $p$-homogeneous. We investigate which types of r.i.~spaces satisfy this condition and show some important embedding properties.
\end{abstract}
\maketitle

\section{Preliminaries}

There are many results in analysis which strongly depend on certain homogeneity properties of the spaces involved. For example, the proof of the fact that the Fourier transform cannot be extended, as a bounded operator, between $L^p(\mathbb R^n)$ and $L^{p'}(\mathbb R^n)$, whenever $p>2$ \cite{StWe}, or, more recently, the applications to a priori regularity estimates on Sobolev or Besov spaces \cite{ChRo}. The main assumption imposed on these spaces $X$ is the control, in terms of a norm, of the dilation operator. Namely, in all of the above cases, the fact that given $r>0$ there exists $h_X(r)>0$ such that 
\begin{equation}\label{homnorm}
\|f(r\,\cdot)\|_X=h_X(r)\|f\|_X.
\end{equation}

The question we want to address in this work is whether for a given space $X$ we can find an equivalent norm which is homogeneous as in \eqref{homnorm}. Since, necessarily, dilations must be bounded on $X$ for the space to be homogeneous, we are going to restrict our attention to the most classical examples in analysis having this property, which are the so-called rearrangement-invariant spaces. For this purpose, let us briefly summarize the fundamental parts of the theory which will be used in what follows (for standard notations and the main concepts presented here, see~\cite{BS}).
\medskip

Given a~$\sigma$-finite measure space $(\Omega,\mu)$, we denote by $\M(\Omega)$ the space of all real-valued $\mu$-measurable functions on~$\Omega$. If $E\subset\Omega$, the symbol $\chi_E$ denotes the characteristic function of~$E$. 

For a~function $f\in\M(\Omega)$ and $t>0$, the \emph{nonincreasing rearrangement of~$f$} is defined by 
	\[
		\f(t) = \inf\{s > 0\vt \mu(\{x\in\Omega\vt |f(x)|>s\}) \leq t\},
	\]
and the \emph{(Hardy--Littlewood) maximal function of $\f$} by
	\[
		f^{**}(t) = \frac1t \int_0^t \f(s)\ds.
	\]
These two functions then satisfy
    \begin{equation}\label{E:12}
        \f(t)\le \ff(t)
    \end{equation}
for every $t>0$.
\begin{definition}\label{D:ri-space}
A~mapping $\cdn:\M(\Omega)\to [0,\infty]$ is said to be a \emph{rearrangement-invariant (r.i.) norm} if for all functions $f,\ g,\ f_n\in\M(\Omega)$ ($n\in\N$), for all constants $a\in \mathbb R$ and all $\mu$-measurable sets $E\subset\Omega$, the following conditions are satisfied:
	\begin{enumerate}[label=(P\theenumi),itemsep=5pt]
		\item
			$\|f+g\|\le \|f\|+\|g\|$,\quad $\|af\|=|a|\|f\|$,\quad $\|f\|=0\, \Leftrightarrow f=0$ $\mu$-a.e.; 
		\item
			$0\le g\le f$ $\mu$-a.e.\ $\Rightarrow\ \|g\|\le\|f\|$;
		\item
			$0\le f_n\uparrow f$ $\mu$-a.e.\ $\Rightarrow\ \|f_n\|\uparrow\|f\|$;
		\item
			$\mu(E)<\infty\ \Rightarrow\ \|\chi_E\|<\infty$;
		\item
			$\mu(E)<\infty$ and $f\ge0 \ \Rightarrow\ \int_E f\,\mathrm{d}\mu\le C_E \|f\|$ for some constant $C_E\in(0,\infty)$ depending on $E$, but independent of $f$;
		\item
			$\f=\g \ \Rightarrow\ \|f\|=\|g\|.$			
	\end{enumerate}
Then, the linear space $X\subset \M(\Omega)$ defined by 
	\[
		X=\{f\in\M(\Omega)\vt \|f\|<\infty\}
	\]
is called a \emph{rearrangement-invariant space}. 
\end{definition}

We will mostly use the notation $\cdn_X$ to emphasize the relation between the space~$X$ and its generating r.i.~norm~$\cdn=\cdn_X$.
	
Suppose that $(X,\cdn_X)$ is an r.i.~space. Then it is a~Banach space and there exists another r.i.~space~$(\overline X,\cdn_{\overline X})$ consisting of Lebesgue-measurable functions on $[0,\infty)$ such that, for all $f\in X$,
	\[
		\|f\|_X = \|\f \|_{\overline X}.
	\]
The space $\overline X$ is then called the \emph{representation space of~$X$} \cite[Theorem~II.4.10]{BS}. 

The \emph{associate space to~$X$} is defined by 
	\[
		X' = \{f\in\M(\Omega)\vt \|f\|_{X'}<\infty\},
	\]
where
$$
	\|f\|_{X'} = \sup\left\{ \int_\Omega |fg|\, \mathrm{d}\mu \Vt g\in\M(\Omega),\, \|g\|_X\le 1 \right\}.
$$
The mapping $\cdn_{X'}$ and the set $X'$ are then an~r.i.~norm and an~r.i.~space, respectively.

Next, we define the \emph{fundamental function of~$X$} by
	\[
		\varphi_X(t) = \|\chi_E\|_X,
	\]
where $t>0$ and $E$ is any $\mu$-measurable subset of~$\Omega$ such that $\mu(E)=t$. Notice that the value $\varphi_X(t)$ indeed does not depend on the particular choice of the set $E$ as long as $\mu(E)=t$, since $\cdn_X$ satisfies the (P6) condition (see above).

If $X,Y$ are r.i.~spaces, we say that $X$ is \emph{continuously embedded} into $Y$, and write $X\hookrightarrow Y$, if $X\subset Y$ and there exists a constant $C>0$ such that $\|f\|_Y\le C \|f\|_X$ for every $f\in X$ (observe that the continuous embedding always hold for r.i. spaces, as long as the inclusion is satisfied \cite[Theorem I.1.8]{BS}).

If $p\in[1,\infty]$, then $p'$ stands for the H\"older conjugate exponent of~$p$, i.e., 
    \[
        p'= \begin{cases}
       \infty, & p=1;\\[2pt]
            \frac{p}{p-1}, & p\in(1,\infty);\\[2pt]
            1, & p=\infty.
        \end{cases}
    \]
We will use the standard notation $A\lesssim B$, meaning that there exists a~constant $C\in(0,\infty)$ such that $A\le CB$ and $C$ is independent of ``essential quantities in $A$ and $B$''. Writing $A\approx B$ then means that both $A\lesssim B$ and $B\lesssim A$ are true.
In this context, the term \emph{equivalence constant} refers to the constant~$C$ such that $C^{-1}A\le B \le CA$.
For the sake of clarity, in our statements we usually specify on which quantities the equivalence constant may or may not depend.

\section{Homogeneous rearrangement-invariant norms}

In what follows we are going to define the notion of homogeneity and show further properties of homogeneous r.i.~norms and spaces. Let us note that the term ``homogeneity'' is, of course, also used to describe the behavior of a~norm with respect to scalar multiples. However, what we have in mind is a~different property of r.i.~function norms related to the dilation operator, as given by the definitions below. To further prevent confusion, we are using the terms ``$p$-homogeneity'' or ``$h$-homogeneity'' throughout the text.\\

Observe that if the underlying measure space of an r.i.~space $X$ is $[0,\infty)$ with the Lebesgue measure, then $X=\overline{X}$ with equal norms. In the rest of the paper we work in such a~setting. Throughout the text, we denote the Lebesgue measure by $\lambda$ and the aforementioned set of functions by $\M([0,\infty))$.

\begin{definition}\label{D:dilation}
	Let $(X,\cdn)$ be a~rearrangement-invariant space over $([0,\infty),\lambda)$ and let $r\in(0,\infty)$. The \emph{dilation operator} $D_r\colon X\mapsto \M([0,\infty))$ is defined by
	\[
	D_rf(t) \ceq f(rt), \quad f\in X,\ t>0.
	\]
\end{definition}

\begin{definition}\label{D:h-hmg}
	Let $(X,\cdn_X)$ be a~rearrangement-invariant space over $([0,\infty),\lambda)$ and $h\colon (0,\infty)\to(0,\infty)$ be a~function. We say that the norm $\cdn_X$ is \emph{$h$-homogeneous} if 
		\begin{equation}\label{1}
			\|D_rf\|_X = h(r)\|f\|_X, \text{ for every } f\in X.
		\end{equation}
\end{definition}

\begin{remark}\label{h-is-norm}
	In the setting of the previous definition, it is not difficult to observe that $\cdn_X$ being $h$-homogeneous implies 
		\begin{equation}\label{hnorm}
			h(r)=\|D_r\|_{X\to X}, \text{ for all } r>0.
		\end{equation}
	Indeed, for every $r>0$ and $\eps>0$ there exists a function $f\in X$ such that $\|f\|_X \le 1$ and 
		\[
			h(r) \ge h(r)\|f\|_X = \|D_rf\|_X > \|D_r\|_{X\to X}-\eps,
		\]
	while, on the other hand,
		\[
			h(r) = \|D_rf\|_X \le \|D_r\|_{X\to X}\|f\|_X \le \|D_r\|_{X\to X}.
		\]
	Since $\eps>0$ was arbitrary, \eqref{hnorm} follows.
\end{remark}

The dilation operator satisfies $D_r\circ D_s = D_{rs}$, for any $r,s>0$. This, in fact, puts rather strong restrictions on what the function $h$ in the definition of $h$-homogeneity may be.

\begin{theorem}\label{T:power}
	Let $h\colon [0,\infty)\to[0,\infty)$ be a~nonconstant function. Let $(X,\cdn_X)$ be a~rearrangement-invariant space over $([0,\infty),\lambda)$ such that the norm $\cdn_X$ is $h$-homogeneous. Then, there exists $p\in[1,\infty)$ such that
		\[
			h (r) = r^{-\jp}, \text{ for all } r>0.
		\]
\end{theorem}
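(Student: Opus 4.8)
The plan is to exploit the functional equation that $h$ must satisfy as a consequence of the semigroup identity $D_r\circ D_s = D_{rs}$. Starting from the $h$-homogeneity relation \eqref{1}, apply $D_s$ first and then $D_r$ to an arbitrary nonzero $f\in X$: this gives $\|D_r D_s f\|_X = h(r)\|D_s f\|_X = h(r)h(s)\|f\|_X$, while on the other hand $\|D_r D_s f\|_X = \|D_{rs} f\|_X = h(rs)\|f\|_X$. Choosing $f$ with $0<\|f\|_X<\infty$ (which exists by (P1) and (P4)) and cancelling, we obtain
	\begin{equation*}
		h(rs) = h(r)h(s), \quad \text{for all } r,s>0.
	\end{equation*}
So $h$ is a multiplicative function on the multiplicative group $(0,\infty)$, and in particular $h(1)=1$ and $h(r)>0$ everywhere (as already guaranteed by the codomain in Definition~\ref{D:h-hmg}).

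The next step is to upgrade this purely algebraic identity to the conclusion $h(r)=r^{-1/p}$. The standard route is to pass to the logarithm: setting $\psi(u) = \log h(\e^{u})$ for $u\in\R$ turns the multiplicativity of $h$ into additivity, $\psi(u+v) = \psi(u)+\psi(v)$, i.e.\ $\psi$ satisfies the Cauchy functional equation on $\R$. To rule out the pathological (non-measurable, everywhere discontinuous) solutions, I would invoke some regularity of $h$ coming from the r.i.\ structure. The cleanest such property is monotonicity: the dilation norm $\|D_r\|_{X\to X}$ is well known to be nonincreasing in $r$ on r.i.\ spaces over $[0,\infty)$ (larger $r$ compresses the function towards the origin, and combined with \eqref{hnorm} this forces $h$ to be monotone); alternatively one can note directly that for $r\ge 1$ and $f\ge 0$ nonincreasing, $D_r f \le f$ pointwise after accounting for rearrangement, giving $h(r)\le 1 = h(1)$, and then multiplicativity propagates monotonicity to all of $(0,\infty)$. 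A monotone (equivalently, locally bounded, or Lebesgue-measurable) solution of the Cauchy equation is linear, so $\psi(u) = cu$ for some constant $c\in\R$, whence $h(r) = r^{c}$. Since $h$ is nonincreasing we get $c\le 0$, and since $h$ is nonconstant, $c<0$; writing $c=-1/p$ with $p\in(0,\infty)$ gives $h(r)=r^{-1/p}$.

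It remains to pin down the range of $p$, namely $p\ge 1$. Here I would use the fundamental function $\varphi_X$. Applying $h$-homogeneity to $f=\chi_{(0,t)}$: one computes $D_r\chi_{(0,t)} = \chi_{(0,t/r)}$, so $\varphi_X(t/r) = h(r)\varphi_X(t)$, i.e.\ $\varphi_X(t) = t^{1/p}\varphi_X(1)$ after setting $r=t$ (when $t<1$) or $r=1/t$ (when $t>1$) and using $h(r)=r^{-1/p}$. Thus the fundamental function is a pure power $\varphi_X(t)\approx t^{1/p}$. Now every fundamental function of an r.i.\ space is quasiconcave — in particular $t\mapsto \varphi_X(t)$ is nondecreasing and $t\mapsto \varphi_X(t)/t$ is nonincreasing — and the latter condition applied to $t^{1/p}$ forces $1/p\le 1$, i.e.\ $p\ge 1$. (The nondecreasing part only gives $p>0$, which we already have.)

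The main obstacle I anticipate is the regularity input needed to kill the wild Cauchy solutions: one must argue carefully that $h$, equivalently $r\mapsto\|D_r\|_{X\to X}$, is genuinely monotone (or at least measurable/locally bounded) using only the r.i.\ axioms (P1)--(P6), rather than taking this for granted. Everything else — the functional equation, the logarithmic substitution, and the fundamental-function computation forcing $p\ge1$ — is routine once that monotonicity is in hand.
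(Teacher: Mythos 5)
Your proof is correct and follows essentially the same strategy as the paper's: derive the multiplicative equation $h(rs)=h(r)h(s)$ from $D_r\circ D_s=D_{rs}$, use monotonicity of $r\mapsto\|D_r\|_{X\to X}$ to exclude the pathological solutions of the resulting Cauchy equation, and read off $h(r)=r^{-\jp}$. The paper solves the Cauchy equation by hand (extending $h(r^q)=h(r)^q$ from rational $q$ to real $q$ via density and monotonicity) rather than quoting the classical regularity theorem, and it imports both the monotonicity of $h$ and the bound $h(r)\le\max\{1,r^{-1}\}$ from \cite[Proposition~III.5.11]{BS}; your sketched direct argument for monotonicity (for $r\ge1$ one has $D_rf^*\le f^*$ pointwise, hence $h(r)\le1$, and multiplicativity then propagates this to all of $(0,\infty)$) is a sound substitute, so the obstacle you flag is not a real gap. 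The one genuinely different step is the proof that $p\ge1$: the paper uses $r^{-\jp}=h(r)\le r^{-1}$ for $r\in(0,1)$, whereas you compute the fundamental function $\phi_X(t)=t^{\jp}\phi_X(1)$ and invoke its quasiconcavity ($\phi_X(t)/t$ nonincreasing forces $\jp\le1$). Both rest on standard facts about r.i.\ spaces; your route has the small bonus of establishing along the way what the paper later records as Proposition~\ref{T:ff-of-hmg}.
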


\begin{proof}
	Let $r>0$.
	Since $X$ is an r.i.~space over $([0,\infty),\lambda)$, it holds that $X=\rX$. Then, by \cite[Proposition~III.5.11]{BS}, the operator $D_{r}$ is bounded on $X$ for every $r>0$ and its norm $\|D_{r}\|_{X \to X}$ is nonincreasing with respect to~$r$. Hence, $h$ is nonincreasing on~$(0,\infty)$, and, obviously, positive. Besides that, by \eqref{1} one has
		\[
			h(rs) = h(r)h(s), \quad \text{ for all } r,s\in(0,\infty),
		\]
	which implies
		\begin{equation}\label{2}
			h(r^q) = (h(r))^q
		\end{equation}
	for all $r\in(0,\infty)$ and $q\in \Q$.
	Let $r\in(0,\infty)$ and $t\in\R$. Now, we use that $h$ is nonincreasing and $\Q$ is dense in $\R$, to obtain
		\begin{alignat*}{2}
			h(r^t) & \le \inf \big\{ h(r^q) \vt q\in\Q,\, q<t \big\} 
					   = \inf \big\{ (h(r))^q \vt q\in\Q,\, q<t \big\} = h(r)^t \\
					  & = \sup \big\{ (h(r))^q \vt q\in\Q,\, q>t \big\} 
					   = \sup \big\{ h(r^q) \vt q\in\Q,\, q>t \big\} 
					   \le h(r^t).
		\end{alignat*}
	Thus, \eqref{2} holds for all $r\in(0,\infty)$ and $q\in\R$. Denote
		\[
			p \ceq - \frac 1{\log h(\e)},
		\]
	recalling that $h(\e)> 0$, which follows from \eqref{hnorm}. 
	For every $r\in(0,\infty)$ we get
		\[
			h(r) = h(\e^{\log r}) = (h(\e))^{\log r} = \e^{-\frac{\log r}p} = r^{-\jp}.
		\]
	For every $r>0$, by \cite[Proposition~III.5.11]{BS} we have
		\[
			h(r)\le \max\{1,r^{-1}\}.
		\]
	Thus, the estimate $r^{-\jp}=h(r)\le r^{-1}$ has to be true for every $r\in(0,1)$, which implies that $p\ge 1$. This finishes the proof.	  
\end{proof}

In the light of the previous theorem we may rephrase Definition~\ref{D:h-hmg} in the following way, without any loss of generality.

\begin{definition}\label{D:p-hmg}
	Let $(X,\cdn_X)$ be a~rearrangement-invariant space over $([0,\infty),\lambda)$, and let $p\in[1,\infty]$. We say that the norm $\cdn_X$ is \emph{$p$-homogeneous} if, for every $f \in X$ and every $r>0$,
	\[
		\|D_rf\|_X = \begin{cases}
						r^{-\jp}\|f\|_X, &  p\in[1,\infty);\\[5pt]
						\|f\|_X, &  p=\infty.
					 \end{cases}
	\]
\end{definition}

To simplify the notation, in the following statements we will use the convention
	\[
		t^\frac1\infty = t^{-\frac1\infty} = 1, \text{ \quad for all } t>0,
	\]
so that the case of $\infty$-homogeneity is directly included. However, as we show below in Proposition~\ref{P:endpoints}, this case is rather uninteresting since the only $\infty$-homogeneous r.i.~space is $L^\infty$.

Next, we show that if \eqref{1} is satisfied merely with an equivalence, instead of the equality, the space $X$ may be always equivalently renormed with a $p$-homogeneous r.i.~norm.

\begin{prop}\label{T:equiv-norm}
	Let $p\in[1,\infty]$ and $(X,\cdn_X)$ be a~rearrangement-invariant space over $([0,\infty),\lambda)$ such that 
		\begin{equation}\label{3}
			\|D_rf\|_X \approx r^{-\jp} \|f\|_X
		\end{equation}
	holds for all $f\in X$ and all $r\in(0,\infty)$ with the equivalence constant being independent of $f$ and $r$. For every $f\in X$ let 
		\[
			\tnorm{f} \ceq \sup_{s>0} s^{\jp}\|D_sf\|_X.
		\]
	Then, $\tnorm{\,\cdot\,}\colon X\to[0,\infty)$ is a~$p$-homogeneous rearrangement-invariant norm on $X$ which is equivalent to $\cdn_X$.
\end{prop}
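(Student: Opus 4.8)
The plan is to verify that $\tnorm{\,\cdot\,}$ satisfies the axioms (P1)--(P6), that it is finite on $X$ and equivalent to $\cdn_X$, and finally that it is $p$-homogeneous. The equivalence and finiteness come first and almost for free: from \eqref{3} there is a constant $C\ge 1$ with $C^{-1}r^{-\jp}\|f\|_X \le \|D_rf\|_X \le C r^{-\jp}\|f\|_X$ for all $r>0$, so $s^{\jp}\|D_sf\|_X$ lies between $C^{-1}\|f\|_X$ and $C\|f\|_X$ for every $s>0$; taking the supremum gives $\|f\|_X \le \tnorm{f} \le C\|f\|_X$ (the lower bound using $s=1$). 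In particular $\tnorm{f}<\infty$ exactly when $\|f\|_X<\infty$, so $\tnorm{\,\cdot\,}$ is defined and finite precisely on $X$.

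Next I would check the norm axioms. Properties (P1) and (P2) are inherited pointwise in $s$: for each fixed $s$, $f\mapsto s^{\jp}\|D_sf\|_X$ is subadditive, absolutely homogeneous in scalars, and monotone (since $D_s$ is linear, positive, and preserves the pointwise order), and these properties pass to the supremum over $s$; the definiteness part of (P1) follows because $\tnorm{f}\ge\|f\|_X$. For (P3), if $0\le f_n\uparrow f$ $\lambda$-a.e.\ then $0\le D_sf_n\uparrow D_sf$ a.e.\ for each $s$, so by (P3) for $\cdn_X$ we get $s^{\jp}\|D_sf_n\|_X \uparrow s^{\jp}\|D_sf\|_X$; taking suprema and using that the supremum of an increasing family of monotone sequences is again monotone (a standard exchange-of-suprema argument: $\sup_n \tnorm{f_n} = \sup_n \sup_s (\cdots) = \sup_s \sup_n (\cdots) = \tnorm{f}$) yields $\tnorm{f_n}\uparrow\tnorm{f}$. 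Properties (P4)--(P6) are immediate from the two-sided bound $\|f\|_X\le\tnorm{f}\le C\|f\|_X$: (P4) and (P5) because the corresponding finiteness/estimate for $\cdn_X$ transfers up to the constant $C$ (for (P5) the constant $C_E$ gets replaced by $CC_E$), and (P6) because $\tnorm{\,\cdot\,}$ depends on $f$ only through the quantities $\|D_sf\|_X$, each of which depends only on $\f$ — indeed $D_sf$ and $D_sg$ have the same rearrangement whenever $\f=\g$, since $(D_sf)^* = D_s(\f)$.

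The heart of the matter, and the one genuinely new computation, is $p$-homogeneity. For $r>0$ and $f\in X$, using $D_s D_r = D_{sr}$ and the substitution $s=ur$,
\[
	\tnorm{D_rf} = \sup_{s>0} s^{\jp}\|D_sD_rf\|_X = \sup_{s>0} s^{\jp}\|D_{sr}f\|_X = \sup_{u>0} (ur)^{\jp}\|D_{u}f\|_X = r^{\jp}\,\tnorm{f},
\]
which is exactly the $p$-homogeneity identity (and for $p=\infty$ reads $\tnorm{D_rf}=\tnorm{f}$ under the stated convention). I expect no real obstacle here — the semigroup property of $D_r$ was tailor-made for this — but the point deserving care is the exchange-of-suprema step in (P3), where one should note that each sequence $n\mapsto s^{\jp}\|D_sf_n\|_X$ is nondecreasing and the outer $\sup_s$ of nondecreasing sequences is nondecreasing, so Fatou-type monotone convergence applies; and one should remember to invoke $(D_sf)^*=D_s(\f)$ (equivalently $\|D_sf\|_X = \|D_s\f\|_{\rX} = \|D_s\f\|_X$ since $X=\rX$) to get (P6). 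This completes the verification that $\tnorm{\,\cdot\,}$ is a $p$-homogeneous r.i.\ norm equivalent to $\cdn_X$.
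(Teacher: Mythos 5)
Your overall strategy is exactly the paper's: deduce equivalence from \eqref{3}, verify the r.i.-norm axioms (which the paper dismisses as ``easy to see'' but you spell out correctly, including the exchange of suprema for (P3) and the identity $(D_sf)^*=D_s(\f)$ for (P6)), and obtain $p$-homogeneity from the semigroup law $D_sD_r=D_{sr}$. However, the one computation you yourself single out as ``the heart of the matter'' contains a sign error that makes your displayed conclusion false as stated. The correct substitution is $u=sr$, i.e.\ $s=u/r$, which gives
\[
	\sup_{s>0} s^{\jp}\,\|D_{sr}f\|_X \;=\; \sup_{u>0}\Bigl(\tfrac{u}{r}\Bigr)^{\jp}\|D_uf\|_X \;=\; r^{-\jp}\,\tnorm{f},
\]
whereas you substitute $s=ur$ and arrive at $\sup_{u>0}(ur)^{\jp}\|D_uf\|_X=r^{\jp}\tnorm{f}$; note that with $s=ur$ the dilation parameter $sr$ becomes $ur^2$, not $u$, so the middle equality in your display does not hold. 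Moreover, the identity $\tnorm{D_rf}=r^{\jp}\tnorm{f}$ is \emph{not} ``exactly the $p$-homogeneity identity'': Definition~\ref{D:p-hmg} requires $\tnorm{D_rf}=r^{-\jp}\tnorm{f}$, which is what the corrected substitution yields and what the paper's proof records. This is a local algebraic slip rather than a conceptual gap --- everything else, including the two-sided bound $\|f\|_X\le\tnorm{f}\le C\|f\|_X$ and the verification of (P1)--(P6), is sound --- but as written the key step fails and must be repaired as above.
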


\begin{proof}
	It is easy to see that $\tnorm{\,\cdot\,}\colon X\to[0,\infty)$ is indeed an r.i.~norm. By \eqref{3}, for any $f\in X$,
		\[
			\tnorm{f} = \sup_{s>0} s^{\jp}\|D_sf\|_X \approx \|f\|_X,
		\]
	hence $\tnorm{\,\cdot\,}$ is equivalent to $\cdn_X$. Moreover, if $r>0$, we have
		\[
			\tnorm{D_rf} = \sup_{s>0} s^{\jp} \|D_s(D_rf)\|_X = \sup_{s>0} s^{\jp} \|D_{sr}f\|_X = \sup_{t>0} r^{-\jp}t^{\jp} \|D_tf\|_X = r^{-\jp}\tnorm{f},
		\]
	thus $\tnorm{\,\cdot\,}$ is $p$-homogeneous.
\end{proof}

It turns out that the r.i.~spaces (over the real semiaxis) which may be equivalently renormed by a~$p$-homogeneous r.i.~norm are characterized by a~simple one-sided estimate concerning the original norm, as given by \eqref{5} in the proposition below.

\begin{prop}\label{T:upper-enough}
	Let $(X,\cdn_X)$ be a~rearrangement-invariant space over $([0,\infty),\lambda)$ and let $p\in[1,\infty]$. Then, $\cdn_X$ is equivalent to a $p$-homogeneous r.i.~norm if and only if there exists a~constant $C>0$ such that
		\begin{equation}\label{5}
			\|D_rf\|_X\le Cr^{-\jp}\|f\|_X,
		\end{equation}
	for all $r>0$ and all $f\in X$. In particular, $\cdn_X$ is a~$p$-homogeneous norm if and only if  	
		\[
			\|D_rf\|_X\le r^{-\jp}\|f\|_X,
		\]
	for all $r>0$ and all $f\in X$.
\end{prop}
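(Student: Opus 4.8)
The plan is to deduce the characterization from Proposition~\ref{T:equiv-norm} by upgrading the one-sided bound \eqref{5} to the two-sided equivalence \eqref{3}, using that the family $\{D_r\}_{r>0}$ is a~group under composition. First I would dispose of the two necessity implications, which are immediate: if $\cdn_X$ is equivalent to a~$p$-homogeneous r.i.~norm $\tnorm{\,\cdot\,}$, then for all $r>0$ and $f\in X$,
\[
\|D_rf\|_X\lesssim\tnorm{D_rf}=r^{-\jp}\tnorm{f}\lesssim r^{-\jp}\|f\|_X,
\]
which is \eqref{5}; and if $\cdn_X$ is itself $p$-homogeneous then the same display holds with equality and constant~$1$.

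For the sufficiency, assume \eqref{5}. Then each $D_r$ maps $X$ into $X$, and since $D_{1/r}\circ D_r=D_1=\mathrm{Id}$ on $\M([0,\infty))$, applying \eqref{5} to the function $D_rf\in X$ with dilation parameter $\tfrac1r$ gives
\[
\|f\|_X=\|D_{1/r}(D_rf)\|_X\le C\Bigl(\tfrac1r\Bigr)^{-\jp}\|D_rf\|_X=Cr^{\jp}\|D_rf\|_X,
\]
that is, $\|D_rf\|_X\ge C^{-1}r^{-\jp}\|f\|_X$. Together with \eqref{5} this is exactly \eqref{3}, with equivalence constant $C$ independent of $f$ and $r$, so Proposition~\ref{T:equiv-norm} applies and $\cdn_X$ is equivalent to the $p$-homogeneous r.i.~norm $\tnorm{f}=\sup_{s>0}s^{\jp}\|D_sf\|_X$. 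For the final ``in particular'' clause, one runs the same computation with $C=1$: then $\|D_rf\|_X$ is squeezed between $r^{-\jp}\|f\|_X$ and $r^{-\jp}\|f\|_X$, hence $\|D_rf\|_X=r^{-\jp}\|f\|_X$, meaning $\cdn_X$ is $p$-homogeneous.

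I do not anticipate a~genuine obstacle: the argument reduces to the group identity $D_{1/r}\circ D_r=\mathrm{Id}$ together with Proposition~\ref{T:equiv-norm}. The only point needing a~little care is the endpoint $p=\infty$, where the convention $t^{\pm\frac1\infty}=1$ must be carried through; under it every display above is literally valid, \eqref{5} reading $\|D_rf\|_X\le\|f\|_X$ and the reverse bound upgrading it to the equality $\|D_rf\|_X=\|f\|_X$ required by Definition~\ref{D:p-hmg}.
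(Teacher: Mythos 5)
Your proof is correct and follows essentially the same route as the paper: the lower bound is obtained exactly as in the paper's proof by applying \eqref{5} to $D_rf$ with dilation parameter $\tfrac1r$, after which Proposition~\ref{T:equiv-norm} (and the $C=1$ sandwich for the ``in particular'' clause) finishes the argument. The only difference is that you write out the necessity direction, which the paper dismisses as obvious.
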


\begin{proof}
	To prove sufficiency, suppose that there is a~$C>0$ such that \eqref{5} holds, for all $f\in X$ and all $r>0$. Then, for every $f\in X$ and $r>0$ we have
		\[
			\frac{r^{-\jp}}C \le \frac{ \|D_rf\|_X }{ \|D_{\frac1r}(D_rf)\|_X } = \frac{ \|D_rf\|_X }{ \|f\|_X } \le Cr^{-\jp}.
		\]
	By Proposition~\ref{T:equiv-norm}, $\cdn_X$ is then equivalent to a $p$-homogeneous r.i.~norm. If $C=1$, then $\cdn$ itself is a $p$-homogeneous r.i.~norm. 
	The necessity part is obvious.
\end{proof}

A necessary condition of $p$-homogeneity (or at least equivalence to a $p$-homogeneous norm) is given in terms of the fundamental function.

\begin{prop}\label{T:ff-of-hmg}
	Let $p\in[1,\infty]$ and $(X,\cdn_X)$ be a rearrangement-invariant space over $([0,\infty),\lambda)$ such that its norm $\cdn_X$ is $p$-homogeneous. Then, the fundamental function $\phi_X$ satisfies
		\begin{equation*}\label{4}
			\phi_X(t) = t^{\jp}\phi_X(1), \quad \text{for all } t>0.
		\end{equation*} 
\end{prop}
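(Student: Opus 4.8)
The plan is to exploit the fact that, over $([0,\infty),\lambda)$, the fundamental function can be computed on the specific sets $[0,t)$, and that each such characteristic function is a dilation of the single function $\chi_{[0,1)}$.

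First I would recall that by the (P6) condition the value $\varphi_X(t)=\|\chi_E\|_X$ is independent of the choice of $E$ with $\lambda(E)=t$, so in particular $\varphi_X(t)=\|\chi_{[0,t)}\|_X$ for every $t>0$. Next I would observe the pointwise identity
	\[
		D_{\frac1t}\chi_{[0,1)}(s) = \chi_{[0,1)}\Big(\frac st\Big) = \chi_{[0,t)}(s), \qquad s>0,
	\]
which expresses $\chi_{[0,t)}$ as the image of $\chi_{[0,1)}$ under the dilation operator $D_{1/t}$. Applying the $p$-homogeneity hypothesis (Definition~\ref{D:p-hmg}) with $r=\tfrac1t$ and $f=\chi_{[0,1)}$ then gives
	\[
		\varphi_X(t) = \|\chi_{[0,t)}\|_X = \big\|D_{\frac1t}\chi_{[0,1)}\big\|_X = \Big(\tfrac1t\Big)^{-\jp}\|\chi_{[0,1)}\|_X = t^{\jp}\varphi_X(1),
	\]
and the same computation, under the stated convention $t^{1/\infty}=1$, covers the case $p=\infty$. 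It is worth noting that $\varphi_X(1)=\|\chi_{[0,1)}\|_X$ is finite by (P4) and strictly positive by (P1), so the statement is not vacuous.

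I do not expect any serious obstacle here: the only subtlety is correctly matching the parameter in $D_r$ (recall $D_rf(s)=f(rs)$, so it is $D_{1/t}$, not $D_t$, that stretches $[0,1)$ to $[0,t)$), and making sure to invoke (P6) so that the argument is independent of which set of measure $t$ one uses. Everything else is a one-line application of the definition.
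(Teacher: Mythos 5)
Your proof is correct and follows essentially the same route as the paper: both arguments realize $\chi_{[0,t)}$ (or $\chi_{E_r}$ for an arbitrary set $E$ of measure one) as a dilation of a characteristic function of a set of measure $1$ and then apply the $p$-homogeneity identity once. The only cosmetic difference is that you fix the concrete set $[0,1)$ via (P6) while the paper works with a general $E$ and substitutes $t=r^{-1}$ at the end.
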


\begin{proof}
	Let $E$ be a~measurable subset of $[0,\infty)$ such that $\lambda(E)=1$. Recall that $\|\chi_E\|_X=\phi_X(1)$. Now, for every $r>0$ denote
		\[
			E_r \ceq \{ x\in [0,\infty) \vt rx \in E \}.
		\]
	Then, $D_r\chi_E = \chi_{E_r}$. It is readily seen that $\lambda\left( E_r \right)=r^{-1}$. Hence,
		\[
			\phi_X\left( r^{-1} \right) = \left\| \chi_{E_r} \right\|_X = \left\| D_r\chi_E \right\|_X = r^{-\jp} \|\chi_E\|_X = r^{-\jp} \phi_X(1).
		\]
	By substituting $t=r^{-1}$ we obtain the result.
\end{proof}

\begin{cor}\label{C:ff-of-equiv-hmg}
	Let $p\in[1,\infty]$ and let $(X,\cdn_X)$ be a~rearrangement-invariant space over $([0,\infty),\lambda)$ whose norm $\cdn_X$ is equivalent to a $p$-homogeneous r.i.~norm. Then, the fundamental function of~$X$ satisfies  
		\begin{equation}\label{E:fund-eq}
		    \phi_X(t) \approx t^{\jp},
		\end{equation}
	for all $t>0$ with the equivalence constant independent of~$t$.
\end{cor}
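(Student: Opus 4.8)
The plan is to reduce the statement to Proposition~\ref{T:ff-of-hmg} by transferring the conclusion through the norm equivalence. By hypothesis there is a $p$-homogeneous r.i.~norm $\tnorm{\,\cdot\,}$ on $X$ and a constant $C>0$ with $C^{-1}\tnorm{f}\le\|f\|_X\le C\tnorm{f}$ for all $f\in X$. I would first note that norm equivalence forces equivalence of the fundamental functions: if $E\subset[0,\infty)$ is measurable with $\lambda(E)=t$, then $\phi_X(t)=\|\chi_E\|_X$ and, denoting by $\psi$ the fundamental function associated with $\tnorm{\,\cdot\,}$, we have $\psi(t)=\tnorm{\chi_E}$, so $C^{-1}\psi(t)\le\phi_X(t)\le C\psi(t)$ for every $t>0$.

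Next I would apply Proposition~\ref{T:ff-of-hmg} to the $p$-homogeneous norm $\tnorm{\,\cdot\,}$, which yields $\psi(t)=t^{\jp}\psi(1)$ for all $t>0$. Here $\psi(1)=\tnorm{\chi_E}$ with $\lambda(E)=1$ is a genuine positive and finite constant, by properties (P1) and (P4) applied to $\tnorm{\,\cdot\,}$. Combining this with the previous step gives
\[
	\frac{\psi(1)}{C}\, t^{\jp} \le \phi_X(t) \le C\,\psi(1)\, t^{\jp}, \quad t>0,
\]
which is exactly \eqref{E:fund-eq} with equivalence constant $C\psi(1)$ (equivalently $C\,\phi_X(1)$, since $\phi_X(1)\approx\psi(1)$), and this constant is manifestly independent of~$t$.

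Alternatively, one can avoid naming $\tnorm{\,\cdot\,}$ altogether: by Proposition~\ref{T:upper-enough} (and the two-sided estimate established in its proof) we have $\|D_rf\|_X\approx r^{-\jp}\|f\|_X$ with constants independent of $f$ and $r$; taking $f=\chi_E$ with $\lambda(E)=1$ and arguing as in the proof of Proposition~\ref{T:ff-of-hmg} (so that $D_r\chi_E=\chi_{E_r}$ with $\lambda(E_r)=r^{-1}$) gives $\phi_X(r^{-1})=\|D_r\chi_E\|_X\approx r^{-\jp}\phi_X(1)$, and the substitution $t=r^{-1}$ finishes the argument. There is no real obstacle here; the only point requiring a word of justification is that $\phi_X(1)$ (and $\psi(1)$) are finite and strictly positive, so that they may be absorbed into the equivalence constant.
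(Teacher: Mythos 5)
Your main argument is correct and is essentially the paper's proof: the paper likewise observes that equivalent norms yield equivalent fundamental functions and then invokes Proposition~\ref{T:ff-of-hmg}; you have merely spelled out the details (and added a valid alternative via the dilation estimate). No issues.
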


\begin{proof}
	Since r.i.~spaces with equivalent norms have equivalent fundamental functions, the assertion clearly follows from Proposition~\ref{T:ff-of-hmg}.
\end{proof}

The previous results have the following consequence (for the definitions of $L^{p,1}$, $L^{p,\infty}$ see Section~3 below):

\begin{prop}\label{P:endpoints}
	Let $p\in[1,\infty]$ and let $(X,\cdn_X)$ be a~rearrangement-invariant space over~$\R^n$ whose norm $\cdn_X$ is equivalent to a $p$-homogeneous r.i.~norm. Then the following assertions are true:
		\begin{enumerate}[label={\rm(\roman*)},itemsep=5pt]
			\item 
				If $p=1$, then $X=L^1$.
			\item
				If $p\in(1,\infty)$, then $L^{p,1} \hookrightarrow X \hookrightarrow L^{p,\infty}$.
			\item
				If $p=\infty$, then $X=L^\infty$.
		\end{enumerate}
\end{prop}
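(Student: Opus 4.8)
The plan is to exploit the Calderón–Mityagin type characterization together with the fundamental function identity from Corollary~\ref{C:ff-of-equiv-hmg}. Throughout, by Proposition~\ref{T:upper-enough} we may assume without loss of generality that $\cdn_X$ is itself $p$-homogeneous (renorming does not change the set $X$ nor the validity of the embeddings, only the constants).

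First consider $p\in(1,\infty)$, case (ii). By Corollary~\ref{C:ff-of-equiv-hmg} the fundamental function satisfies $\phi_X(t)\approx t^{1/p}$, which is exactly the fundamental function of $L^{p,1}$ and of $L^{p,\infty}$. For the embedding $L^{p,1}\hookrightarrow X$ I would use the fact that $L^{p,1}$ is the smallest r.i.\ space with fundamental function (equivalent to) $t^{1/p}$: indeed, every $f\in L^{p,1}$ can be written as an average of characteristic functions, $f^*(t)\approx \int_0^\infty \chi_{[0,s]}(t)\,\mathrm d\phi_{L^{p,1}}(s)$ up to the usual layer-cake estimate, and applying the triangle inequality and $\phi_X\approx\phi_{L^{p,1}}$ yields $\|f\|_X\lesssim\|f\|_{L^{p,1}}$. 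Concretely, one has $\|f\|_X=\|f^*\|_{\overline X}\le\int_0^\infty\|\chi_{[0,t]}\|_X\,(-\mathrm df^*(t))$-type bound, giving $\|f\|_X\lesssim\int_0^\infty \phi_X(t)\,\frac{f^*(t)}{t}\,\mathrm dt\approx\int_0^\infty t^{1/p-1}f^*(t)\,\mathrm dt=\|f\|_{L^{p,1}}$. For the embedding $X\hookrightarrow L^{p,\infty}$ I would pass to associate spaces: $X\hookrightarrow L^{p,\infty}$ is equivalent to $L^{p',1}=(L^{p,\infty})'\hookrightarrow X'$. Now $X'$ is again an r.i.\ space over $[0,\infty)$, its fundamental function satisfies $\phi_{X'}(t)=t/\phi_X(t)\approx t^{1/p'}$, and $\cdn_{X'}$ is $p'$-homogeneous (because the dual of $D_r$ is $r^{-1}D_{1/r}$, so $p$-homogeneity of $\cdn_X$ forces the corresponding homogeneity of $\cdn_{X'}$). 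Hence the already-proved "smallest space" direction applied to $X'$ gives $L^{p',1}\hookrightarrow X'$, which dualizes back to $X\hookrightarrow L^{p,\infty}$.

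For the endpoint cases (i) and (iii): if $p=1$, then $\phi_X(t)\approx t$, and the argument above (the $L^{1,1}=L^1$ smallest-space direction) gives $L^1\hookrightarrow X$; conversely $X\hookrightarrow L^{1,\infty}$, but an r.i.\ space with $\phi_X(t)\approx t$ that embeds into $L^{1,\infty}$ must in fact be $L^1$, since $\phi_X(t)\approx t$ forces, via the Hardy–Littlewood–Pólya principle and $p$-homogeneity applied to level sets, $\|f\|_X\approx\int_0^\infty f^*\,\mathrm dt=\|f\|_{L^1}$; the cleanest route is to note $L^1$ is both the smallest and the largest r.i.\ space with fundamental function $t$ (the largest because $\int_E|f|\le C_E\|f\|_X$ from (P5) combined with $p$-homogeneity upgrades to $\|f\|_{L^1}\lesssim\|f\|_X$ uniformly). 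For $p=\infty$: $\phi_X(t)\approx 1$ for all $t>0$; since $L^\infty$ is the largest r.i.\ space, $L^\infty\hookrightarrow X$ always fails direction—rather $X\hookrightarrow L^\infty$ needs justification and $L^\infty\hookrightarrow X$ is automatic because $\|f\|_X\le\|f^*\|_X\le\|f\|_\infty\phi_X(\infty)$... here I would instead argue directly: $\infty$-homogeneity means $\|D_rf\|_X=\|f\|_X$ for all $r$, so for $f\in X$ and any $a>0$, $\|f^*\chi_{[0,a]}\|_X=\|D_{1/a}(f^*\chi_{[0,1]}\circ(a\cdot)^{-1})\|$-type scaling shows $\|f\|_X$ cannot depend on the "width" of the level sets at all, forcing $\|f\|_X\approx\|f\|_\infty$; combined with $\phi_X$ bounded this pins $X=L^\infty$.

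The main obstacle I anticipate is making the "$L^{p,1}$ is the smallest r.i.\ space with the given fundamental function" step fully rigorous, i.e.\ justifying the inequality $\|f\|_X\lesssim\int_0^\infty t^{1/p-1}f^*(t)\,\mathrm dt$ from $\phi_X\approx t^{1/p}$ alone—one must handle the decomposition of a general nonincreasing $f^*$ into characteristic functions carefully (a discretization into dyadic level sets, or the integral representation $f^*(t)=\int_t^\infty(-\mathrm df^*)$ together with property (P3) for the monotone convergence), and then sum using $h$-homogeneity of the norm to control $\|\chi_{[0,2^k]}\|_X\approx 2^{k/p}$. The endpoint $p=\infty$ is the second delicate point, since there "$L^{\infty,1}$" degenerates and the smallest-space argument collapses; that case genuinely needs the separate scaling argument sketched above exploiting that $\infty$-homogeneity makes the norm blind to dilations.
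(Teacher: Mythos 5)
Your proposal follows the same basic strategy as the paper: reduce everything to the fundamental function identity $\phi_X(t)\approx t^{1/p}$ from Corollary~\ref{C:ff-of-equiv-hmg} and then use the extremality of the Lorentz and Marcinkiewicz endpoint spaces among r.i.\ spaces with a given fundamental function. The difference is that the paper simply cites this extremality as \cite[Theorem~II.5.13]{BS} (namely $\Lambda_{\phi_X}\hookrightarrow X\hookrightarrow M_{\phi_X}$, with $\Lambda_{t^{1/p}}=L^{p,1}$, $M_{t^{1/p}}=L^{p,\infty}$, and $\Lambda_t=M_t=L^1$), whereas you re-derive it by hand. Your derivation of $L^{p,1}\hookrightarrow X$ (decomposing $f^*$ into characteristic functions and using (P3) to pass to the limit) is the standard proof of the cited theorem and is fine; for $X\hookrightarrow L^{p,\infty}$ the detour through associate spaces is correct but unnecessary, since $t^{1/p}f^*(t)\approx\phi_X(t)f^*(t)=\|f^*(t)\chi_{[0,t]}\|_X\le\|f^*\|_{\overline{X}}=\|f\|_X$ gives it in one line --- and this is also the cleanest route for $p=1$, where $\sup_{t>0}tf^{**}(t)=\|f\|_1$ shows $M_t=L^1$ without any appeal to (P5) or to $1$-homogeneity. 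The one genuine soft spot is $p=\infty$: the claim that ``scaling makes the norm blind to the width of level sets, forcing $\|f\|_X\approx\|f\|_\infty$'' is not a proof as written (the identity $\|f^*\chi_{[0,a]}\|_X=\|(D_af^*)\chi_{[0,1]}\|_X$ does not by itself yield the conclusion), but the case follows from $\phi_X\approx1$ alone: $f^*(t)\lesssim f^*(t)\phi_X(t)\le\|f\|_X$ gives $X\hookrightarrow L^\infty$, while $\|f\|_X=\|f^*\|_{\overline{X}}\le\|f\|_\infty\lim_{t\to\infty}\phi_X(t)<\infty$ (using (P3)) gives $L^\infty\hookrightarrow X$; this is precisely the verification the paper leaves to the reader.
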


\begin{proof}
	Let $\|\cdot \|_X$  be equivalent to a $p$-homogeneous r.i.~norm. By Coro\-llary~\ref{C:ff-of-equiv-hmg}, the fundamental function of~$X$ then satisfies $\phi_X(t)\approx t^{\jp}$, for all $t>0$.
	
	Concerning the case $p=\infty$, it is not difficult to verify that $\phi_X(t)\approx 1$ implies that $\cdn_X$ is equivalent to $\cdn_\infty$. 
	
	As for the other cases, i.e.,~$p\in[1,\infty)$, by \cite[Theorem~II.5.13]{BS}, we have that $\Lambda_{\phi_X} \hookrightarrow X \hookrightarrow M_{\phi_X}$, where $\Lambda_{\phi_X}$ and $M_{\phi_X}$ are the Lorentz and Marcinkiewicz endpoint spaces, respectively. If $p=1$, these spaces coincide and are equal to $L^1$, while for $p\in(1,\infty)$ one has $\Lambda_{\phi_X}=L^{p,1}$ and $M_{\phi_X}=L^{p,\infty}$.
\end{proof}

Above, we have shown that \eqref{E:fund-eq} is a necessary condition for $\cdn_X$ to be equivalent to a $p$-homogeneous r.i.\ norm with $p\in[1,\infty)$. However, it is not sufficient, as we demonstrate in the following example, which is stated as another proposition.

\begin{prop}
	Let $1<p<\infty$. For any $f\in\M(\R^n)$ define
		\[	
			\|f\|_Y = \left( \int_0^1 (\ff(t))^2\, t^{\frac2p -1} \dt \right)^\frac12 + \int_1^\infty \ff(t)\, t^{\frac1p -1}\dt,
		\]
	and denote 
		\[
			Y = \left\{ f\in\M(\R^n) \Vt \|f\|_Y<\infty\right\}.
		\]
    Then, $Y$ is a rearrangement-invariant Banach function space and $\phi_{Y}(t) \approx t^\frac1p$, for any $t>0$, but $\cdn_Y$ is not equivalent to any $p$-homogeneous rearrangement-invariant norm.
\end{prop}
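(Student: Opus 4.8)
\emph{Step 1: $Y$ is a rearrangement-invariant Banach function space.} Write $\|f\|_Y=N_2(\ff)+N_1(\ff)$, where $N_2(g)=\big(\int_0^1 g(t)^2 t^{\frac2p-1}\dt\big)^{1/2}$ and $N_1(g)=\int_1^\infty g(t)t^{\jp-1}\dt$ are, respectively, a weighted $L^2$- and a weighted $L^1$-norm acting on nonnegative functions. Since $(f+g)^{**}\le\ff+g^{**}$ pointwise and $N_1,N_2$ are monotone and subadditive on nonnegative functions, $\cdn_Y$ satisfies the triangle inequality; it is clearly positively homogeneous, and $\|f\|_Y=0$ forces (via $N_1$) $\ff(t)=0$ for some $t>0$, hence $\f\equiv0$ and $f=0$ a.e., so (P1) holds. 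Property (P2) is immediate from $0\le g\le f\Rightarrow g^{**}\le\ff$, property (P3) from $f_n\uparrow f\Rightarrow f_n^{**}\uparrow\ff$ combined with the monotone convergence theorem applied to the two integrals, and (P6) from the fact that $\|f\|_Y$ depends on $f$ only through $\f$. For (P5), with $t_0\ceq\lambda(E)$ one has $\int_E|f|\,\mathrm{d}\lambda\le t_0\ff(t_0)$, and as $\ff$ is nonincreasing the value $\ff(t_0)$ is controlled by the first term of $\|f\|_Y$ when $t_0\le1$ and by the second term when $t_0>1$; (P4) follows from Step~2. (Alternatively, $Y$ is the sum of the two $\Gamma$-type spaces generated by the weights $t^{\frac2p-1}\chi_{(0,1)}$ and $t^{\jp-1}\chi_{(1,\infty)}$, and one invokes the known theory of such spaces.) I regard this verification as routine and would not spell it out.

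\emph{Step 2: the fundamental function.} For $\lambda(E)=t$ one has $\chi_E^*=\chi_{(0,t)}$ and $(\chi_E)^{**}(s)=\min\{1,t/s\}$. Substituting this into $\cdn_Y$, splitting the two integrals at $s=t$ (respectively $s=1$), and evaluating the resulting elementary integrals — here $1<p<\infty$ guarantees convergence of the second one near $\infty$ — gives $\phi_Y(t)\approx t^{\jp}+\min\{t,1\}\approx t^{\jp}$ uniformly in $t>0$.

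\emph{Step 3: $\cdn_Y$ is not equivalent to any $p$-homogeneous r.i.\ norm.} By Proposition~\ref{T:upper-enough} it suffices to refute \eqref{5} for every constant $C$, i.e.\ to show that $\sup\{r^{\jp}\|D_rf\|_Y/\|f\|_Y\vt r>0,\ 0\ne f\in Y\}=\infty$. One readily checks $(D_rf)^*=D_r\f$, whence also $(D_rf)^{**}=D_r\ff$, so the substitution $u=rt$ yields
	\[
		r^{\jp}\|D_rf\|_Y=\Big(\int_0^r\ff(u)^2u^{\frac2p-1}\,\mathrm{d}u\Big)^{1/2}+\int_r^\infty\ff(u)u^{\jp-1}\,\mathrm{d}u;
	\]
denote the right-hand side by $\Phi_r(f)$. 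Fix $r\in(0,\e^{-1})$ and let $f_r$ be the nonincreasing function on $(0,\infty)$ with $f_r(t)=\min\{r^{-\jp},t^{-\jp}\}$ for $t\in(0,1)$ and $f_r(t)=0$ for $t\ge1$, so that $f_r^*=f_r$. From $f_r(t)\le t^{-\jp}$ one checks $f_r^{**}(t)\le p'\,t^{-\jp}$ for $t\in(0,1]$ and $f_r^{**}(t)\le p'/t$ for $t\ge1$, whence, using $\int_r^1 t^{-1}\dt=\log\tfrac1r$,
	\[
		\|f_r\|_Y\le\Big(\tfrac p2+(p')^2\log\tfrac1r\Big)^{1/2}+(p')^2\lesssim_p\Big(\log\tfrac1r\Big)^{1/2}.
	\]
On the other hand, by \eqref{E:12} we have $f_r^{**}\ge f_r^*$, so
	\[
		\Phi_r(f_r)\ge\int_r^1 f_r^{**}(t)\,t^{\jp-1}\dt\ge\int_r^1 t^{-\jp}t^{\jp-1}\dt=\log\tfrac1r.
	\]
Consequently $r^{\jp}\|D_rf_r\|_Y/\|f_r\|_Y\gtrsim_p(\log\tfrac1r)^{1/2}\to\infty$ as $r\to0^+$, contradicting \eqref{5}; this proves the claim. (Together with Step~2 and Corollary~\ref{C:ff-of-equiv-hmg}, the same computation even rules out equivalence to a $q$-homogeneous norm for \emph{any} $q\in[1,\infty]$, since that would force $\phi_Y(t)\approx t^{1/q}$ and hence $q=p$.)

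\emph{Main obstacle.} Steps~1 and~2 are bookkeeping; the content lies in the test-function construction of Step~3. The underlying phenomenon is that for the self-similar profile $t^{-\jp}$ the weighted $L^1$-quantity $\int t^{-\jp}t^{\jp-1}\dt=\int\dtt$ grows like the logarithm of the length of the range of integration, whereas the weighted $L^2$-quantity $\big(\int t^{-\frac2p}t^{\frac2p-1}\dt\big)^{1/2}=\big(\int\dtt\big)^{1/2}$ grows only like the square root of that logarithm; dilating by a small $r$ slides the ``$L^2$-window'' $(0,1)$ onto $(0,r)$ and thereby converts a logarithmic amount of control out of the $L^2$-term into the (for this profile strictly larger) $L^1$-term. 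The delicate part is to place the two cut-offs of $f_r$ — here at the scales $r$ and $1$ — so that $\|f_r\|_Y$ stays of order $(\log\tfrac1r)^{1/2}$ while $r^{\jp}\|D_rf_r\|_Y$ is of order $\log\tfrac1r$; everything else reduces to the elementary estimates indicated above.
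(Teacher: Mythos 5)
Your proof is correct, and Step~3 takes a genuinely different route from the paper's. The paper fixes a \emph{single} test function, with $\ff(t)=\tfrac{\chi_{[0,a)}(t)}{t^{1/p}(1-\log t)}+\tfrac ct\chi_{[a,\infty)}(t)$ (a logarithmically damped copy of the borderline profile $t^{-1/p}$, damped just enough to make $\|f\|_Y$ finite), and shows directly that $r^{1/p}\|D_rf\|_Y\ge\int_r^a\frac{\ds}{s(1-\log s)}\to\infty$ as $r\to0^+$; it then concludes via Theorem~\ref{T:power} and Proposition~\ref{T:equiv-norm}. You instead use a \emph{family} of truncated pure-power profiles $f_r=\min\{r^{-1/p},t^{-1/p}\}\chi_{(0,1)}$ and exploit the quantitative mismatch $\|f_r\|_Y\lesssim(\log\tfrac1r)^{1/2}$ versus $r^{1/p}\|D_rf_r\|_Y\gtrsim\log\tfrac1r$, concluding via Proposition~\ref{T:upper-enough}; both arguments rest on the same phenomenon you identify (the $L^1$-type term sees $\int\dtt$ as a full logarithm, the $L^2$-type term only as its square root), and your route to the conclusion is if anything slightly more direct. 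I verified your computations: $(D_rf)^{**}(t)=\ff(rt)$ and the change of variables giving $\Phi_r$, the bounds $f_r^{**}\le p't^{-1/p}$ on $(r,1]$ and $f_r^{**}\equiv r^{-1/p}$ on $(0,r]$ (which is what actually produces the $\tfrac p2$ term — your phrasing attributes the whole estimate on $(0,1]$ to $f_r^{**}\le p't^{-1/p}$, which alone would give a divergent integral near $0$, so you should make the split at $t=r$ explicit), and the lower bound $\int_r^1t^{-1/p}t^{1/p-1}\dt=\log\tfrac1r$ all check out. Steps~1 and~2 match the paper's (unspelled-out) verification and its explicit fundamental-function computation.
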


\begin{proof}
	It is straightforward to show that $\big( Y, \cdn_Y \big)$ is an r.i.~Banach function space.
	Moreover, for $t\in(0,1)$, a direct calculation yields
		\[
			\phi_{Y}(t) = \left( \left( \textstyle\frac{p}{2p-2} \left( p - t^{\frac{2p-2}p} \right) \right)^\frac12 + \textstyle\frac{p}{p-1}t^{\frac{p-1}p} \right) t^\frac1p \approx t^\frac1p,
		\]
	and for $t\in[1,\infty)$,
		\[
			\phi_{Y}(t) = \left(\textstyle\frac p2\right)^\frac12 - p + \textstyle\frac{p^2}{p-1}t^\frac1p \approx t^\frac1p.
		\]
	Thanks to this and Proposition~\ref{T:ff-of-hmg}, the norm $\cdn_Y$ may only be $\alpha$-homogeneous for $\alpha=p$. We will prove that it is not the case. 
	
	Denote $a\ceq \exp\, (1-2p)$, $c\ceq \frac1{2p}\exp\big(\frac{1-2p}{p'}\big)$. Then there exists a~measurable function~$f$ on $\R^n$ satisfying
		\[
			\ff (t) = \frac{ \chi_{[0,a)}(t) }{ t^\frac1p (1-\log t) } + \frac ct \chi_{[a,\infty)}(t),
		\] 
	for all $t>0$. Indeed, one may check that the first derivative of the function 
		\[
			\psi(t) = \frac{ t^\frac1{p'} \chi_{[0,a)}(t) }{ 1-\log t }
		\]
	is positive on $(0,a)$ and the second derivative is negative on $(0,a)$. Thus we may take $\f=\frac{\mathrm{d\psi}}{\mathrm{d}t}\chi_{(0,a)}$, which is a~positive, decreasing and integrable function on $(0,a)$ extended by zero on $[a,\infty)$. Notice that $c$ was chosen so that $c=\int_0^a \f(s)\ds$. 
	
	Then,
		\[
			\|f\|_Y = \left( \int_0^a \frac{ \dt }{ t(1 - \log t)^2 } + \int_a^1 c^2 t^{\frac 2p-3} \dt \right)^\frac12 + \int_1^\infty ct^{\frac 1p - 2} \dt <\infty.
		\]
		
	Let $r\in(0,a)$. We have
		\[
		(D_r f)^{**} (t) = \frac{ \chi_{[0,r^{-1}a)}(t) }{ r^\jp t^\frac1p (1-\log r t) } + \frac c{r t} \chi_{[r^{-1}a,\infty)}(t),
		\]
	and thus
		\begin{align*}
			\|D_rf\|_Y & = \left( \int_0^1 \frac{ \dt }{ r^\frac{2}p t(1 - \log r t)^2 } \right)^\frac12 + \int_1^{\frac{a}{r}} \frac{ \dt }{ r^{\jp} t(1-\log r t)} + c \int_{\frac{a}{r}}^\infty r^{-1} t^{\frac 1p -2} \dt \\
			& = r^{-\jp} \left( \left( \int_0^r \frac{ \ds }{ s(1 - \log s)^2 } \right)^\frac12 + \int_r^a \frac{ \ds }{ s(1-\log s)} + c \int_{a}^\infty s^{\frac 1p -2} \ds \right).
		\end{align*}
	Consequently,
		\[
			\frac{ \|D_rf\|_Y } {r^{-\jp} \|f\|_Y } > \frac1{ \|f\|_Y }\int_r^a \frac{ \ds }{ s(1-\log s)} \xrightarrow{r\to 0_+} \infty,
		\]
	and hence the relation $\|D_rf\|_Y \approx r^{-\jp} \|f\|_Y$ does not hold. By Theorem~\ref{T:power} and Proposition~\ref{T:equiv-norm}, $\cdn_Y$ is not equivalent to a $p$-homogeneous norm.
\end{proof}
As we see next, norms associated to $p$-homogeneous r.i.~norms exhibit rather expected behavior.

\begin{prop}\label{T:ass-hmg}
	Let $p\in(1,\infty)$ and $(X,\cdn_X)$ be a rearrangement-invariant space over $([0,\infty),\lambda)$ such that the norm $\cdn_X$ is $p$-homogeneous. Then, the associate norm $\cdn_{X'}$ is $p'$-homogeneous.
\end{prop}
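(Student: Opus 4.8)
The plan is to verify the one-sided estimate from Proposition~\ref{T:upper-enough}, namely that $\|D_r g\|_{X'} \le r^{-\frac1{p'}}\|g\|_{X'}$ for all $r>0$ and all $g\in X'$, since by the second part of that proposition this is exactly what is needed to conclude that $\cdn_{X'}$ is $p'$-homogeneous. (A matching lower bound would then follow automatically by applying the upper bound to $D_{1/r}$ and using $D_{1/r}D_r = \mathrm{id}$, as in the proof of Proposition~\ref{T:upper-enough}, but in fact Proposition~\ref{T:upper-enough} only requires the upper estimate.) The key computational input will be the duality definition of $\cdn_{X'}$ together with the change of variables that relates a dilation of $g$ to a dilation of the competing function~$f$.

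First I would write, for fixed $r>0$ and $g\in X'$,
	\[
		\|D_r g\|_{X'} = \sup\left\{ \int_0^\infty |D_r g(t)|\, |f(t)|\dt \vt f\in\M([0,\infty)),\ \|f\|_X\le 1 \right\}.
	\]
For a fixed competitor $f$ with $\|f\|_X\le 1$, I would perform the substitution $s=rt$ in the integral $\int_0^\infty |g(rt)||f(t)|\dt = r^{-1}\int_0^\infty |g(s)|\,|f(s/r)|\ds = r^{-1}\int_0^\infty |g(s)|\,|D_{1/r}f(s)|\ds$. Since $\cdn_X$ is $p$-homogeneous, $\|D_{1/r}f\|_X = r^{\frac1p}\|f\|_X \le r^{\frac1p}$, so the function $r^{-\frac1p}D_{1/r}f$ has $X$-norm at most one and is therefore an admissible competitor in the supremum defining $\|g\|_{X'}$. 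This yields
	\[
		r^{-1}\int_0^\infty |g(s)|\,|D_{1/r}f(s)|\ds = r^{-1+\frac1p}\int_0^\infty |g(s)|\, \big|r^{-\frac1p}D_{1/r}f(s)\big|\ds \le r^{-1+\frac1p}\|g\|_{X'} = r^{-\frac1{p'}}\|g\|_{X'},
	\]
using $-1+\frac1p = -\frac1{p'}$. Taking the supremum over all admissible $f$ gives $\|D_r g\|_{X'}\le r^{-\frac1{p'}}\|g\|_{X'}$, which is the desired estimate; Proposition~\ref{T:upper-enough} then finishes the argument.

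I do not expect any real obstacle here: the only points requiring a little care are checking that the change of variables is legitimate (it is, since everything is nonnegative and measurable, so Tonelli applies) and confirming that the rescaled function $r^{-\frac1p}D_{1/r}f$ genuinely lies in the admissible set — which is immediate from $p$-homogeneity of $\cdn_X$ and is the crucial place where that hypothesis is used. One should also note that the restriction $p\in(1,\infty)$ guarantees $p'\in(1,\infty)$ so that the exponent $-\frac1{p'}$ is meaningful in the ordinary sense, matching the framework of Proposition~\ref{T:upper-enough}. The fact that $X'$ is itself an r.i.\ space (recalled in the preliminaries) ensures Proposition~\ref{T:upper-enough} is applicable to it.
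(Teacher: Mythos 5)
Your argument is correct and is essentially the paper's own proof: both rest on the duality formula for $\cdn_{X'}$, the substitution $s=rt$, and the $p$-homogeneity of $\cdn_X$ applied to $D_{1/r}f$; the paper simply notes that $h=r^{-\frac1p}D_{1/r}f$ ranges over the whole unit ball of $X$ and so obtains the equality $\|D_rg\|_{X'}=r^{-\frac1{p'}}\|g\|_{X'}$ directly, whereas you record only the inequality and upgrade it via Proposition~\ref{T:upper-enough}. Both routes are valid and of the same length (your passing appeal to Tonelli is unnecessary --- it is a one-variable substitution --- but harmless).
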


\begin{proof}
	Let $f\in X'$. By the $p$-homogeneity of~$X$ one has $\|D_{\frac1r}g\|_X=r^{-\jp}\|g\|_X$ for every $g\in X$. Hence,
		\begin{align*}
			\|D_rf\|_{X'}	& = \sup \left\{ \int_0^\infty f(rt)g(t)\dt \Vt \|g\|_X \le 1 \right\}\\
							& = r^{-1} \sup \left\{ \int_0^\infty f(t)g\big(\textstyle\frac tr \big)\dt \Vt \|g\|_X \le 1 \right\}\\
							& = r^{-1} \sup \left\{ \int_0^\infty f(t)D_{\frac1r}g(t)\dt \Vt \|D_{\frac1r}g\|_X \le r^\jp \right\}\\
							& = r^{\jp-1} \sup \left\{ \int_0^\infty f(t)h(t)\dt \Vt \|h\|_X \le 1 \right\}\\
							& = r^{\frac1{p'}} \|f\|_{X'}.
		\end{align*}
\end{proof}

The final result of this section shows that every r.i.~interpolation space with respect to a~pair of $p$-homogeneous r.i.~spaces inherits the homogeneity property (for the definitions of interpolation property and exact space, see \cite[Definitions~III.1.8 and III.1.12]{BS}). 

\begin{theorem}\label{interphom}
	Let $p\in[1,\infty)$ and let $(X_0,\cdn_{X_0})$, $(X_1,\cdn_{X_1})$ be rearrangement-invariant spaces over $([0,\infty),\lambda)$ such that each of their norms is equivalent to a respective $p$-homogeneous norm. Then, every r.i.~interpolation space $(X,\cdn_X)$ for the pair $(X_0,X_1)$ may be equivalently renormed with a $p$-homogeneous r.i.~norm. 
	Moreover, if $\cdn_{X_0}$ and $\cdn_{X_1}$ are both $p$-homogeneous and the interpolation space $(X,\cdn_X)$ is exact, then its norm $\cdn_X$ is $p$-homogeneous.
\end{theorem}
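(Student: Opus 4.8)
The plan is to deduce everything from the defining property of an interpolation space, applied to the one-parameter family $\{D_r\}_{r>0}$, combined with the characterization of (equivalence to) $p$-homogeneity provided by Proposition~\ref{T:upper-enough}. The point is that $p$-homogeneity, up to equivalence, is entirely encoded in the single upper estimate $\|D_r\|_{X\to X}\lesssim r^{-\jp}$, and such upper estimates pass automatically through any interpolation functor.

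First I would record that, since $X_0$ and $X_1$ are r.i.~spaces over $([0,\infty),\lambda)$, each dilation operator $D_r$ is bounded on $X_0$ and on $X_1$ (by \cite[Proposition~III.5.11]{BS}), hence $D_r$ is a bounded operator on the compatible couple $(X_0,X_1)$. Therefore, if $(X,\cdn_X)$ is an r.i.~interpolation space for this couple, the very definition of an interpolation space (see \cite[Definition~III.1.8]{BS}) yields a constant $C>0$ depending only on $X$, $X_0$ and $X_1$ — in particular \emph{independent of $r$} — such that
\[
	\|D_r\|_{X\to X} \le C\max\big\{\|D_r\|_{X_0\to X_0},\,\|D_r\|_{X_1\to X_1}\big\},\qquad r>0.
\]
Since each $\cdn_{X_i}$ is equivalent to a $p$-homogeneous r.i.~norm, Proposition~\ref{T:upper-enough} supplies constants $C_i>0$ with $\|D_r\|_{X_i\to X_i}\le C_i r^{-\jp}$ for all $r>0$ and $i=0,1$. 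Plugging this into the displayed inequality gives $\|D_rf\|_X\le C\max\{C_0,C_1\}\,r^{-\jp}\|f\|_X$ for all $f\in X$ and all $r>0$, and one more application of Proposition~\ref{T:upper-enough} shows that $\cdn_X$ is equivalent to a $p$-homogeneous r.i.~norm (concretely, one may take the renorming $\tnorm{f}=\sup_{s>0}s^{\jp}\|D_sf\|_X$ of Proposition~\ref{T:equiv-norm}). For the ``moreover'' part, if $\cdn_{X_0}$ and $\cdn_{X_1}$ are genuinely $p$-homogeneous, then Remark~\ref{h-is-norm} gives $\|D_r\|_{X_i\to X_i}=r^{-\jp}$ exactly, and if in addition $X$ is an \emph{exact} interpolation space, the constant $C$ above may be taken equal to $1$ (by \cite[Definition~III.1.12]{BS}), so that $\|D_rf\|_X\le r^{-\jp}\|f\|_X$ for all $f$ and $r$; the ``in particular'' clause of Proposition~\ref{T:upper-enough} then forces $\cdn_X$ itself to be $p$-homogeneous.

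I do not expect a genuine obstacle: the argument is a direct consequence of the interpolation property once the operators $D_r$ are recognized as bounded on the couple and the endpoint operator norms are controlled via Proposition~\ref{T:upper-enough}. The only point deserving a moment's attention is that the interpolation constant $C$ must be uniform in $r$, which is automatic since it depends only on the triple $X$, $X_0$, $X_1$ and not on the particular operator being interpolated. It is also worth noting that the exactness hypothesis is genuinely needed to upgrade from ``equivalent to a $p$-homogeneous norm'' to ``$p$-homogeneous'', since a non-exact interpolation construction introduces a multiplicative constant that need not equal~$1$.
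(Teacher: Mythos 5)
Your proof is correct and follows essentially the same route as the paper: interpolate the uniformly bounded family $\{r^{\jp}D_r\}_{r>0}$ to get the one-sided bound $\|D_rf\|_X\lesssim r^{-\jp}\|f\|_X$ and conclude via Proposition~\ref{T:upper-enough}, with exactness supplying the constant $1$ for the ``moreover'' part. The only quibble is that the uniformity of the interpolation constant over all operators is not literally part of \cite[Definition~III.1.8]{BS} but is the content of \cite[Proposition~III.1.11]{BS} (via a closed-graph argument), which is the reference the paper uses at that step.
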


\begin{proof}
	Let $X_1$, $X_2$ be as in the theorem assumptions and let $X$ be an r.i.~interpolation space for $(X_0,X_1)$. Since $\|D_rf\|_{X_i}\approx r^{-\jp}\|f\|_{X_i}$ for all $r>0$ and $f\in X_i$ for each $i\in\{0,1\}$, \cite[Proposition~III.1.11]{BS} provides a~constant $C>0$ such that $\|D_rf\|_X \le C \|f\|_X$, for all $r>0$ and $f\in X$. By Proposition~\ref{T:upper-enough}, $\cdn_X$ is equivalent to a $p$-homogeneous r.i.~norm. Proving the rest is simple.
\end{proof}

\begin{remark}
An interesting open problem is whether Theorem~\ref{interphom} is actually a characterization of the $p$-homogeneity condition. Recall that an \emph{ultrasymmetric space} $X$ is defined as an r.i.~interpolation space for the couple $(\Lambda_\varphi, M_\varphi)$, formed by the Lorentz and the Marcinkiewicz spaces with fundamental function $\varphi$ (cf.~\cite{Pu}). Now, the question is the following: is it true that every $p$-homogeneous r.i.~space $X$ is ultrasymmetric for $\varphi(t)=t^{1/p}$? The previous theorem shows that the reverse implication is true, up to an equivalent renorming. Notice that \cite{Pu} provides a characterization of ultrasymmetricity in terms of existence of a certain r.i.~space with respect to the measure $\frac{\mathrm dt}{t}$. A possible way of tackling the suggested problem could therefore be to show that mere $p$-homogeneity suffices to construct such a space. Whether this is possible however remains an open question.
\end{remark}

\section{Lorentz spaces}

We are going to provide various examples of $p$-homogeneous r.i.~spaces. Probably the most typical ones are the Lorentz spaces.

For $p\in(1,\infty)$, $q\in[1,\infty]$ and $f\in\M([0,\infty))$ denote
	\begin{alignat*}{1}
		\|f\|_{p,q} & =\begin{cases}
		    \left( \displaystyle\int_0^\infty (\f(t))^q\, t^{\frac qp-1} \dt \right)^\frac1q, & q\in(1,\infty);\\[10pt]
            \displaystyle\sup_{t>0} \f(t)\,t^\frac1p, & q=\infty;\\
		\end{cases}\\
        \|f\|_{(p,q)} & =\begin{cases}
            \left( \displaystyle\int_0^\infty (\ff(t))^q\, t^{\frac qp-1} \dt \right)^\frac1q, & q\in(1,\infty);\\[10pt]
		      \displaystyle\sup_{t>0} \ff(t)\,t^\frac1p, & q=\infty.
        \end{cases}
	\end{alignat*}
The set $\{f\in\M([0,\infty))\vt \|f\|_{(p,q)}<\infty\}$ is then called the \emph{Lorentz space}~$L^{p,q}$. In this setting, the functional $\cdn_{(p,q)}$ is indeed an r.i.~norm and it holds that
	\begin{equation}\label{Lpq-equiv}
		\|f\|_{p,q} \le \|f\|_{(p,q)} \le p' \|f\|_{p,q}
	\end{equation}
for every $f\in L^{p,q}$ (see \cite[Lemma~IV.4.5]{BS}). Since the definition of $p$-homogeneity involves an r.i.~norm and we intend to study $p$-homogeneity of the Lorentz spaces, we directly use the norm $\|\cdot\|_{(p,q)}$ to define the $L^{p,q}$ space, instead of the otherwise standard choice of $\|\cdot\|_{p,q}$. The latter is a~norm for $1\le q \le p <\infty$ (cf.~\cite[Theorem~IV.4.3]{BS}), while in the other cases it is merely equivalent to a~norm by~\eqref{Lpq-equiv}. Notice that in the definition we intentionally left out the case $p=1$ for which there is no analogue of \eqref{Lpq-equiv} and the sets generated by $\cdn_{1,q}$ and $\cdn_{(1,q)}$ are then essentially different. Moreover, neither one of them can be a~$p$-homogeneous r.i.~space with any $p\in[1,\infty]$, which may be shown by Proposition~\ref{P:endpoints}. The case $p=1$ is therefore not interesting for our purposes.\\ 

By the term \emph{weight} we will mean any nonnegative locally integrable measurable function on~$[0,\infty)$. For $p\in[1,\infty)$ a~weight~$v$ and $f\in\M([0,\infty))$ we define
	\begin{alignat*}{1}
		\|f\|_{\Lambda^p(v)} = & \left( \int_0^\infty (\f(t))^p v(t) \dt \right)^\jp;\\
		\|f\|_{\Gamma^p(v)} = & \left( \int_0^\infty (\ff(t))^p v(t) \dt \right)^\jp.\\
	\end{alignat*}
We denote by $\Lambda^p(v)$ the set of all functions $f\in\M([0,\infty))$ satisfying $\|f\|_{\Lambda^p(v)}<\infty$, analogously we do for $\Gamma^p(v)$. These structures are referred to as \emph{weighted Lorentz spaces} (for more information, see \cite{CPSS, CRS}). \\

By a~straightforward calculation, using only a~change of variables, one makes the following observation:
\begin{prop}\label{P:Lpq-hmg}
	For every $p\in(1,\infty)$ and $q\in[1,\infty)$, the norm $\cdn_{(p,q)}$ is $p$-homogeneous.	 
\end{prop}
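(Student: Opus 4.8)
The plan is to verify directly that the functional $\cdn_{(p,q)}$ satisfies the defining equality of $p$-homogeneity from Definition~\ref{D:p-hmg}, namely $\|D_rf\|_{(p,q)} = r^{-\jp}\|f\|_{(p,q)}$ for all $r>0$ and $f\in\M([0,\infty))$. The key observation is that the nonincreasing rearrangement and the Hardy--Littlewood maximal function interact with dilations in a transparent way: for $f\in\M([0,\infty))$ and $r>0$ one has $(D_rf)^*(t) = \f(rt) = D_r\f(t)$, which follows immediately from the definition of the rearrangement via level sets together with the fact that $\lambda(\{rx : |f(x)|>s\}) = r^{-1}\lambda(\{x:|f(x)|>s\})$. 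Consequently $(D_rf)^{**}(t) = \frac1t\int_0^t \f(rs)\ds = \frac{1}{rt}\int_0^{rt}\f(u)\,\mathrm{d}u = \ff(rt) = D_r\ff(t)$.

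With these identities in hand, the computation reduces to a single change of variables. First I would treat the case $q\in(1,\infty)$: writing out
	\[
		\|D_rf\|_{(p,q)}^q = \int_0^\infty \big((D_rf)^{**}(t)\big)^q\, t^{\frac qp - 1}\dt = \int_0^\infty \big(\ff(rt)\big)^q\, t^{\frac qp - 1}\dt,
	\]
and substituting $u = rt$, the factor $t^{\frac qp-1}\dt$ becomes $r^{1-\frac qp}u^{\frac qp-1}\,r^{-1}\,\mathrm{d}u = r^{-\frac qp}u^{\frac qp-1}\,\mathrm{d}u$, so the integral equals $r^{-\frac qp}\|f\|_{(p,q)}^q$. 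Taking $q$-th roots gives $\|D_rf\|_{(p,q)} = r^{-\jp}\|f\|_{(p,q)}$, which is exactly $p$-homogeneity. The case $q=\infty$ (which the statement excludes, as it restricts to $q\in[1,\infty)$, but the same idea covers $q=1$) is analogous with a supremum in place of the integral; and for $q=1$ the integrand is $\ff(rt)\,t^{\frac1p-1}$ and the same substitution yields the factor $r^{-\jp}$.

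I do not expect any genuine obstacle here — this is the "straightforward calculation using only a change of variables" that the paragraph preceding the proposition promises. The only minor point worth stating carefully is the dilation identity for the rearrangement, $(D_rf)^* = D_r\f$, and the fact that $\cdn_{(p,q)}$ is known to be an r.i.~norm (cited from \cite[Lemma~IV.4.5]{BS} and the surrounding discussion), so that Definition~\ref{D:p-hmg} genuinely applies; once those are noted, the change of variables does the rest. One could alternatively invoke Proposition~\ref{T:upper-enough} and only check the one-sided bound $\|D_rf\|_{(p,q)}\le r^{-\jp}\|f\|_{(p,q)}$, but since the exact equality falls out of the same computation at no extra cost, it is cleaner to prove the equality directly.
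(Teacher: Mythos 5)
Your proof is correct and is precisely the "straightforward calculation using only a change of variables" that the paper alludes to without writing out: the identity $(D_rf)^{**}(t)=\ff(rt)$ followed by the substitution $u=rt$ yields the exact factor $r^{-\jp}$. The explicit verification of $(D_rf)^*=D_r\f$ via the level-set scaling is the right point to make carefully, and your handling of $q=1$ by the same integral formula is fine.
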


We will now show that, among the general weighted Lorentz spaces, only the $L^{p,q}$ spaces are $p$-homogeneous. As usual, we are going to use the standard notations $V(t)=\int_0^t v(s)\ds$ and $W(t)=\int_0^t w(s)\ds$. 

\begin{prop}\label{T:Lambda-hmg}
	Let $p\in(1,\infty)$, $q\in[1,\infty)$ and let $w$ be a~weight. Then, the functional $\cdn_{\Lambda^q(w)}$ is equivalent to a~$p$-homogeneous norm if and only if it is equivalent to $\cdn_{p,q}$. 
\end{prop}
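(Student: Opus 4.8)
The plan is to establish the nontrivial direction: if $\cdn_{\Lambda^q(w)}$ is equivalent to some $p$-homogeneous r.i.~norm, then $\Lambda^q(w) = L^{p,q}$ with equivalent norms. The converse is immediate from Proposition~\ref{P:Lpq-hmg} together with \eqref{Lpq-equiv}, since $\cdn_{p,q}$ is then equivalent to the $p$-homogeneous norm $\cdn_{(p,q)}$, and hence $\cdn_{\Lambda^q(w)}$ is equivalent to a $p$-homogeneous norm. So I focus on the forward implication.

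First I would compute the fundamental function of $\Lambda^q(w)$. For a measurable set $E$ with $\lambda(E)=t$ one has $(\chi_E)^* = \chi_{[0,t)}$, so $\|\chi_E\|_{\Lambda^q(w)} = \left(\int_0^t w(s)\ds\right)^{1/q} = W(t)^{1/q}$, i.e.~$\phi_{\Lambda^q(w)}(t) = W(t)^{1/q}$. By Corollary~\ref{C:ff-of-equiv-hmg}, equivalence to a $p$-homogeneous norm forces $\phi_{\Lambda^q(w)}(t) \approx t^{1/p}$, hence $W(t) \approx t^{q/p}$ for all $t>0$, with constants independent of $t$.

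Next I would use this two-sided estimate on $W$ to compare the norms $\cdn_{\Lambda^q(w)}$ and $\cdn_{p,q}$. Using the layer-cake/Fubini representation, for a nonincreasing $f^*$ one can write (for $q\in(1,\infty)$) $\int_0^\infty (f^*(t))^q w(t)\dt$ in terms of $\int_0^\infty W(t)\,\mathrm{d}(-(f^*(t))^q)$ after an integration by parts — more precisely $\int_0^\infty (f^*(t))^q w(t)\dt = q\int_0^\infty f^*(s)^{q-1}\left(-\frac{\mathrm d}{\mathrm ds} f^*\right)? $ is not quite the cleanest route; instead I would directly invoke the standard fact that for any weight $w$ and any nonincreasing function, $\int_0^\infty (f^*(t))^q w(t)\dt \approx \int_0^\infty (f^*(t))^q \frac{W(t)}{t}\dt$ is \emph{false} in general, so the right tool is: since $f^*$ is nonincreasing, $(f^*)^q$ is nonincreasing, and Fubini gives $\int_0^\infty (f^*(t))^q w(t)\dt = \int_0^\infty \left(\int_0^\infty \chi_{\{(f^*)^q > \sigma\}}(t) w(t)\dt\right)\mathrm d\sigma = \int_0^\infty W\big(|\{(f^*)^q>\sigma\}|\big)\,\mathrm d\sigma$. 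Applying $W(u)\approx u^{q/p}$ pointwise turns this into $\int_0^\infty |\{(f^*)^q>\sigma\}|^{q/p}\mathrm d\sigma$, which (again by Fubini, now with the weight $t\mapsto t^{q/p-1}$, i.e.~$W_0(u)=u^{q/p}$) equals $\int_0^\infty (f^*(t))^q\, t^{q/p-1}\dt$ up to the constant $q/p$. Hence $\|f\|_{\Lambda^q(w)}^q \approx \|f\|_{p,q}^q$ for all $f$, i.e.~$\Lambda^q(w)=L^{p,q}$ with equivalent norms. The case $q=1$ is the same computation without the $(\cdot)^q$ exponent. This proves both directions, and combined with Proposition~\ref{P:Lpq-hmg} it shows that $\cdn_{\Lambda^q(w)}$ is equivalent to a $p$-homogeneous norm \emph{iff} it is equivalent to $\cdn_{p,q}$.

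I expect the main obstacle to be purely bookkeeping: making the Fubini/layer-cake argument clean enough that the two-sided estimate $W(u)\approx u^{q/p}$ transfers to a two-sided estimate between the integrals, and handling the boundary behaviour (the $\sigma\to\infty$ tail and $\sigma\to 0$ behaviour, and the possibility that $f^*$ is not strictly decreasing or has jumps) without fuss. One clean way to avoid measure-theoretic subtleties is to note that $\Lambda^q(w)$ and $L^{p,q}$ are both r.i.~spaces whose norms restricted to nonincreasing functions are given by the displayed integral formulas, and that $W\approx W_0$ with $W_0(t)=t^{q/p}$ together with the monotonicity of $f^*$ yields $\int_0^\infty (f^*)^q\,\mathrm dW \approx \int_0^\infty (f^*)^q\,\mathrm dW_0$ directly by the Riemann–Stieltjes comparison for monotone integrators — this sidesteps the need to track level sets explicitly. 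Either way, no deep input beyond Corollary~\ref{C:ff-of-equiv-hmg} is required; the content is that for weighted Lorentz $\Lambda$-spaces the fundamental function already determines the space up to equivalence.
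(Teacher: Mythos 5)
Your proof is correct and follows the same skeleton as the paper's: the converse is handled via Proposition~\ref{P:Lpq-hmg} and \eqref{Lpq-equiv}, and the forward direction starts from the identification $\phi_{\Lambda^q(w)}=W^{1/q}$ together with Corollary~\ref{C:ff-of-equiv-hmg} to force $W(t)\approx t^{q/p}$. The only point of divergence is the final step: the paper deduces $\cdn_{\Lqw}\approx\cdn_{p,q}$ from $W\approx V$ (with $V(t)=\frac pq t^{q/p}$) by citing results of Sawyer and Stepanov, whereas you prove this implication directly via the layer-cake identity $\int_0^\infty (\f(t))^q w(t)\dt=\int_0^\infty W\big(|\{(\f)^q>\sigma\}|\big)\,\mathrm{d}\sigma$, which is valid precisely because the superlevel sets of the nonincreasing function $(\f)^q$ are intervals anchored at the origin, so that integrating $w$ over them yields $W$ evaluated at their measure. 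Applying the two-sided pointwise bound $W(u)\approx u^{q/p}$ under the $\sigma$-integral and reversing the layer-cake step with the weight $t^{q/p-1}$ gives the claimed norm equivalence; Tonelli applies throughout since all integrands are nonnegative, and the $q=1$ case is indeed the same computation. This buys a self-contained, elementary replacement for the two external references at essentially no cost, and your closing observation --- that within the scale $\Lambda^q(\cdot)$ for fixed $q$ the fundamental function determines the space up to equivalent norms --- is exactly the content the paper outsources to those citations.
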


\begin{proof}
	Since $\cdn_{p,q}\approx\cdn_{(p,q)}$ and the latter is a~$p$-homogeneous norm, it suffices to prove the ``only if'' part. Thus, assume that $\cdn_{\Lqw}$ is equivalent to a~$p$-homogeneous norm. By Proposition~\ref{T:ff-of-hmg}, the fundamental function $\phi_{\Lqw}$ satisfies $W^{\frac1q}(t)=\phi_{\Lqw}(t)\approx t^{\jp}$, for all $t>0$. Notice that 
		\begin{equation}\label{L2}
			L^{p,q}=\Lambda^q(v), \text{ with } v(t)=t^{\frac qp-1},
		\end{equation}
	and therefore 
		\begin{equation}\label{L3}
			V(t)=\frac pq\, t^\frac qp.
		\end{equation}
	Hence,
		\begin{equation}\label{L1}
			W^\frac1q(t) V^{-\frac1q}(t) \approx 1,
		\end{equation}
	for all $t>0$, with the equivalence constants independent of $v$ and $w$. By \cite[Remark, p.~148]{Sa} and \cite[Proposition~1(a), p.~176]{St}, relation~\eqref{L1} implies $\cdn_{\Lqw}\approx\cdn_{\Lambda^q(v)}=\cdn_{p,q}$.
\end{proof}

\begin{remark}
Even though we did not need it in the above proof, it is known \cite[Theorem~4]{Sa} that the Lorentz space $\Lambda^q(w)$ is normable if and only if the weight $w$ satisfies the so called $B_q$ condition of Ari\~no and Muckenhoupt  \cite[Theorem~1.7]{AM}.
    
\end{remark}

\begin{prop}\label{T:Gamma-hmg}
	Let $p\in(1,\infty)$, $q\in[1,\infty)$ and let $w$ be a weight. Then, the functional $\cdn_{\Gamma^q(w)}$ is equivalent to a~$p$-homogeneous norm if and only if it is equivalent to $\cdn_{p,q}$. 
\end{prop}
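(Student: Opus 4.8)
The plan is to mimic the structure of the proof of Proposition~\ref{T:Lambda-hmg}, replacing the Lorentz $\Lambda$-space machinery with the corresponding $\Gamma$-space machinery. As before, since $\cdn_{p,q}\approx\cdn_{(p,q)}$ and $\cdn_{(p,q)}$ is $p$-homogeneous by Proposition~\ref{P:Lpq-hmg}, only the ``only if'' direction needs proof. So I assume $\cdn_{\Gamma^q(w)}$ is equivalent to a $p$-homogeneous norm and aim to conclude $\cdn_{\Gamma^q(w)}\approx\cdn_{p,q}$.

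First I would extract the fundamental-function information: by Corollary~\ref{C:ff-of-equiv-hmg} (or Proposition~\ref{T:ff-of-hmg}), $\phi_{\Gamma^q(w)}(t)\approx t^{\jp}$ for all $t>0$. The next step is to compute $\phi_{\Gamma^q(w)}$ explicitly in terms of $W$. For $E$ of measure $t$ one has $\chi_E^{**}(s)=\min\{1,t/s\}$, so
	\[
		\phi_{\Gamma^q(w)}(t) = \left( \int_0^t w(s)\ds + t^q \int_t^\infty s^{-q} w(s)\ds \right)^{\jp q \cdot \frac1q}
		= \left( W(t) + t^q \int_t^\infty \frac{w(s)}{s^q}\ds \right)^\frac1q.
	\]
Thus the hypothesis becomes
	\[
		W(t) + t^q \int_t^\infty \frac{w(s)}{s^q}\ds \approx t^{\frac qp}, \quad t>0,
	\]
with constants independent of $w$. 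In particular $W(t)\lesssim t^{q/p}$, which gives one half of the desired weight comparison with $v(t)=t^{q/p-1}$ from \eqref{L2}, \eqref{L3}.

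Then I would invoke the known characterization of $\Gamma^q(w)\approx\Gamma^q(v)$ (equivalently, of the embedding $\Gamma^q(w)\hookrightarrow\Gamma^q(v)$ and its reverse) in terms of $W$ and $V$ — this is classical and can be found in \cite{CPSS} (or via the normability/characterization results in \cite{CRS}); the relevant condition for $p$-homogeneous-type weights reduces, after the computation above, precisely to the two-sided estimate $W(t)\approx t^{q/p}$ together with the Hardy-type tail condition, both of which follow from the displayed fundamental-function identity. The cleanest route is probably to show directly that the fundamental-function estimate forces $W(t)\approx V(t)$: the upper bound $W(t)\lesssim t^{q/p}$ is immediate, and for the lower bound one uses that the tail term $t^q\int_t^\infty s^{-q}w(s)\,ds$ can be controlled by $W$ via an integration-by-parts/Hardy argument combined again with the upper estimate on $W$, yielding $W(t)\gtrsim t^{q/p}$. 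Once $W\approx V$ is established, the equivalence $\cdn_{\Gamma^q(w)}\approx\cdn_{\Gamma^q(v)}=\cdn_{(p,q)}\approx\cdn_{p,q}$ follows from the monotonicity of the $\Gamma$-functional in the weight (modulo the standard fact that $W\approx V$ suffices for $\Gamma$-space norm equivalence, cf.~\cite[Theorem~2.1]{CPSS}).

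The main obstacle I anticipate is the passage from the fundamental-function estimate to the full weight comparison $W\approx V$: unlike the $\Lambda$ case, where $\phi_{\Lambda^q(w)}=W^{1/q}$ directly, here $\phi_{\Gamma^q(w)}$ also contains the tail term, so one must show this tail term does not contribute anything essentially larger than $W$ — i.e.\ that $t^q\int_t^\infty s^{-q}w(s)\,ds\lesssim W(t)$ is forced. This should follow from the global estimate $W(s)\lesssim s^{q/p}$ by splitting $\int_t^\infty$ dyadically and summing a geometric series (using $q/p<q$, i.e.\ $p>1$), but it is the one genuinely non-formal step and the place where the hypothesis $p>1$ is used.
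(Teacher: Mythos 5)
Your route is genuinely different from the paper's and, once one step is repaired, it works. The paper never tries to identify the weight: instead it compares $\Gqw$ with $\Lambda^q(v)$, $v(t)=t^{\frac qp-1}$, in both directions, invoking the $\Lambda$--$\Gamma$ embedding characterizations of Neugebauer and K\v{r}epela--Mihula--Tur\v{c}inov\'a for one inclusion and Sawyer's theorem for the other, and it must treat $q=1$ separately (there $\Gamma^1(w)=\Lambda^1(u)$ with $u(t)=\int_t^\infty w(s)s^{-1}\ds$, reducing matters to Proposition~\ref{T:Lambda-hmg}). Your plan --- prove $W(t)\approx t^{q/p}\approx V(t)$ and then use that $\int_0^\infty gw\approx\int_0^\infty gv$ for every nonincreasing $g\ge0$ whenever $W\approx V$ (layer-cake: $\int_0^\infty gw=\int_0^\infty W(\lambda(\{g>\alpha\}))\,\mathrm d\alpha$), applied to $g=(\ff)^q$ --- stays entirely within the $\Gamma$ scale, needs no case split in $q$, and replaces the three cited embedding theorems by one elementary identity. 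That trade-off is real and worth making explicit if you write this up.

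The step you yourself flag as delicate is, however, where your sketch does not close as written. From the global bound $W(s)\lesssim s^{q/p}$ the dyadic splitting gives $T(t):=t^q\int_t^\infty s^{-q}w(s)\ds\lesssim t^{q/p}$, \emph{not} $T(t)\lesssim W(t)$; the tail term is genuinely not dominated pointwise by $W(t)$ (take $w$ concentrated just to the right of $t$, so that $W(t)=0$ while $T(t)>0$). Nor can you simply subtract $T(t)\lesssim t^{q/p}$ from $W(t)+T(t)\gtrsim t^{q/p}$, since the implicit constant in the upper bound for $T$ may exceed the lower constant in the fundamental-function estimate. The repair is to evaluate $W$ at a dilated point: for $K>1$,
\begin{equation*}
c\,t^{\frac qp}\le W(t)+t^q\int_t^{Kt}\frac{w(s)}{s^q}\ds+t^q\int_{Kt}^\infty\frac{w(s)}{s^q}\ds\le W(Kt)+K^{-q}\,(Kt)^q\int_{Kt}^\infty\frac{w(s)}{s^q}\ds\le W(Kt)+C K^{\frac qp-q}\,t^{\frac qp},
\end{equation*}
where the last step uses $T(Kt)\le\phi_{\Gqw}^q(Kt)\le C(Kt)^{q/p}$. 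Since $p>1$ gives $\frac qp-q<0$, choosing $K$ with $CK^{\frac qp-q}\le\frac c2$ yields $W(Kt)\ge\frac c2 K^{-\frac qp}(Kt)^{\frac qp}$, i.e.\ $W(t)\gtrsim t^{q/p}$ for all $t$. With this in place your argument is complete: $W\approx V$, hence $\cdn_{\Gqw}\approx\cdn_{\Gamma^q(v)}=\cdn_{(p,q)}\approx\cdn_{p,q}$.
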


\begin{proof}
	As in the preceding proof, it suffices to show the necessity part. We use \eqref{L2}, \eqref{L3} and the estimate 
		\begin{equation}\label{G1}
			\left( W(t) + t^{q} \int_t^\infty \frac{ w(s) }{ s^q }\ds \right)^\frac1q = \phi_{\Gqw}(t)\approx t^{\jp},
		\end{equation}
	granted by Theorem~\ref{T:ff-of-hmg}, to deduce
		\[
			\left( W(t) + t^{q} \int_t^\infty \frac{ w(s) }{ s^q }\ds \right)^\frac1q V^{-\frac1q}(t) \approx 1,
		\]
	for all $t>0$, with the equivalence constants independent of $v$ and $w$. The ``$\gtrsim$'' part of this relation yields $\cdn_{\Lambda^q(v)}\lesssim\cdn_{\Gqw}$ by \cite[Theorem~3.2]{Ne} and \cite[Theorem~4.1(i)]{KMT}. 
	
	Furthermore, from \eqref{G1} we have
		\[
			\left( \int_t^\infty \frac{ w(s) }{ s^q }\ds \right)^\frac1q t^{1-\jp}\lesssim 1.
		\]
	First, suppose that $q>1$. Since
		\[
			t^{1-\jp} \approx \left( \int_0^t \frac{ s^{q'} v(s) }{ V^{q'}(s) }\ds \right)^{\frac1{q'}},
		\]
	we obtain
		\[
			\left( \int_t^\infty \frac{ w(s) }{ s^q }\ds \right)^\frac1q \left( \int_0^t \frac{ s^{q'} v(s) }{ V^{q'}(s) }\ds \right)^{\frac1{q'}}\lesssim 1,
		\]
	for all $t>0$, with the equivalence constants independent of $v$ and $w$. By \cite[Theorem~2]{Sa}, this yields $\cdn_{\Gqw}\lesssim\cdn_{\Lpv}=\cdn_{p,q}$, when $q>1$.
	
	Next, assume that $q=1$ and denote 
		\[
			u(t) = \int_t^\infty \frac{w(s)}s \ds
		\]
	for all $t>0$. For any $f\in\M([0,\infty))$ it holds that
		\[
			\|f\|_{\Gamma^1(w)} = \int_0^\infty \frac{ w(s)}s \int_0^s \f(t)\dt\ds = \int_0^\infty \f(t) \int_t^\infty \frac{ w(s)}s \ds\dt = \|f\|_{\Lambda^1(u)}.
		\]
	Hence, ${\Gamma^1(w)}= \Lambda^1(u)$ and the rest follows from Proposition~\ref{T:Lambda-hmg}.
\end{proof}

\section{Extrapolation-style construction}

In the following two sections we investigate further $p$-homogeneous r.i.~spaces which are not $L^{p,q}$ Lorentz spaces. One way of constructing such a~space is inspired by techniques used in extrapolation theory, since it is defined as a suitable endpoint approximation argument (see, e.g., \cite{Ya}).

\begin{definition}\label{D:Delta}
	Let $\mathcal Q$ be an index set and $\varrho:\mathcal Q\to (0,\infty)$ be a~mapping. For each $q\in\mathcal Q$, let $X_q$ be a rearrangement-invariant space over $([0,\infty),\lambda)$. For $f\in\M([0,\infty))$ define
		\[
			\|f\|_{\Delta_{\QQ, \varrho}} = \sup_{q\in\QQ} \varrho(q) \|f\|_{X_q}.
		\]
	Then we define the set 
		\[
			\Delta_{\QQ, \varrho} = \left\{ f\in\M([0,\infty)) \vt \|f\|_{\Delta_{\QQ, \varrho}}< \infty \right\}.
		\] 
\end{definition}

Recall that the space $L^1\cap L^\infty$, based on the underlying measure space $([0,\infty),\lambda)$, consists of all functions $f\in\M([0,\infty))$ such that
    \[
        \|f\|_{L^1\cap L^\infty} = \max\,\{ \|f\|_1, \|f\|_\infty \} < \infty.
    \]
    
\begin{theorem}\label{T:Delta-ri}
	Let $\mathcal Q$ be an index set and $\varrho:\mathcal Q\to (0,\infty)$ be a~mapping. For each $q\in\mathcal Q$, let $X_q$ be a rearrangement-invariant space over $([0,\infty),\lambda)$. Assume that 
		\begin{equation}\label{E:ue1}
			\sup_{q\in\QQ} \varrho(q)\,\varphi_{X_q}(1) < \infty.
		\end{equation}
	Then, $\|\cdot\|_{\Delta_{\QQ, \varrho}}$ is a rearrangement-invariant norm. Moreover, if $p\in(1,\infty)$ and the norm $\|\cdot\|_{X_q}$ is $p$-homogeneous for each $q\in\QQ$, then $\|\cdot\|_{\Delta_{\QQ, \varrho}}$ is $p$-homogeneous as well.
\end{theorem}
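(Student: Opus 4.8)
The plan is to verify the two assertions separately, each reducing to routine manipulations once the right earlier results are invoked.

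\textbf{Step 1: $\|\cdot\|_{\Delta_{\QQ,\varrho}}$ is an r.i.~norm.} I would check the properties (P1)--(P6) of Definition~\ref{D:ri-space} one at a time. Properties (P1)--(P3) and (P6) follow immediately from the fact that each $\|\cdot\|_{X_q}$ enjoys them and that a supremum of a family of norms (with nonnegative coefficients $\varrho(q)$) again satisfies the triangle inequality, absolute homogeneity in scalars, monotonicity, the Fatou property, and invariance under equimeasurable rearrangement; the only subtle point in (P1) is that $\|f\|_{\Delta_{\QQ,\varrho}}=0\Rightarrow f=0$ a.e., which holds because a single $q$ already gives $\varrho(q)\|f\|_{X_q}=0$, hence $f=0$ a.e. The crucial role of hypothesis~\eqref{E:ue1} is to secure (P4), i.e.\ finiteness of $\|\chi_E\|_{\Delta_{\QQ,\varrho}}$ for sets of finite measure: for $\lambda(E)=t$ one has $\varrho(q)\|\chi_E\|_{X_q}=\varrho(q)\varphi_{X_q}(t)$, and since the norms are $p$-homogeneous (or at least their fundamental functions are controlled), Proposition~\ref{T:ff-of-hmg} gives $\varphi_{X_q}(t)=t^{\jp}\varphi_{X_q}(1)$, so $\|\chi_E\|_{\Delta_{\QQ,\varrho}}=t^{\jp}\sup_{q}\varrho(q)\varphi_{X_q}(1)<\infty$. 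Finally (P5), the local-integrability bound $\int_E f\,\mathrm d\lambda\le C_E\|f\|_{\Delta_{\QQ,\varrho}}$, follows by fixing any single $q_0\in\QQ$: then $\int_E f\le C_{E,X_{q_0}}\|f\|_{X_{q_0}}\le \varrho(q_0)^{-1}C_{E,X_{q_0}}\|f\|_{\Delta_{\QQ,\varrho}}$. (For this we just need $\QQ\ne\emptyset$, which is implicit.)

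\textbf{Step 2: $p$-homogeneity.} Assume now each $\|\cdot\|_{X_q}$ is $p$-homogeneous. For $r>0$ and $f\in\M([0,\infty))$ compute directly from the definition:
\[
	\|D_rf\|_{\Delta_{\QQ,\varrho}} = \sup_{q\in\QQ}\varrho(q)\|D_rf\|_{X_q} = \sup_{q\in\QQ}\varrho(q)\, r^{-\jp}\|f\|_{X_q} = r^{-\jp}\sup_{q\in\QQ}\varrho(q)\|f\|_{X_q} = r^{-\jp}\|f\|_{\Delta_{\QQ,\varrho}},
\]
where the middle equality is exactly Definition~\ref{D:p-hmg} applied in each $X_q$ and the factor $r^{-\jp}$ pulls out of the supremum since it is independent of $q$. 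Alternatively, one may invoke Proposition~\ref{T:upper-enough}: the estimate $\|D_rf\|_{X_q}\le r^{-\jp}\|f\|_{X_q}$ for every $q$ passes to the supremum, giving $\|D_rf\|_{\Delta_{\QQ,\varrho}}\le r^{-\jp}\|f\|_{\Delta_{\QQ,\varrho}}$, which by that proposition already forces the norm to be exactly $p$-homogeneous. This completes the proof.

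\textbf{Main obstacle.} The genuinely nontrivial point is the verification of (P4) via~\eqref{E:ue1}; everything else is a mechanical transfer of the corresponding property from the individual spaces $X_q$ through the supremum. One should also be mildly careful that the hypothesis~\eqref{E:ue1} is stated for $\varphi_{X_q}(1)$ only, which is why Proposition~\ref{T:ff-of-hmg} (rescaling the fundamental function) is needed to deduce finiteness of $\|\chi_E\|_{\Delta_{\QQ,\varrho}}$ for all finite $t$, not just $t=1$; and that the case $p=\infty$ is explicitly excluded in the second assertion, so no degenerate behaviour of $t^{\jp}$ arises.
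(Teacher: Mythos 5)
Your overall architecture matches the paper's proof: (P1)--(P3) and (P6) pass through the supremum, (P5) is obtained by fixing a single $q_0\in\QQ$ and using $\varrho(q_0)\|f\|_{X_{q_0}}\le\|f\|_{\Delta_{\QQ,\varrho}}$, and the $p$-homogeneity is a one-line computation pulling $r^{-\jp}$ out of the supremum (your alternative via Proposition~\ref{T:upper-enough} is also fine).

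There is, however, a genuine gap in your verification of (P4). The first assertion of the theorem --- that $\|\cdot\|_{\Delta_{\QQ,\varrho}}$ is an r.i.~norm --- assumes only \eqref{E:ue1}; the $p$-homogeneity of the spaces $X_q$ enters only in the ``moreover'' part. Yet your argument for (P4) invokes Proposition~\ref{T:ff-of-hmg} to write $\varphi_{X_q}(t)=t^{\jp}\varphi_{X_q}(1)$, and that proposition requires each $\|\cdot\|_{X_q}$ to be $p$-homogeneous. Your parenthetical hedge ``or at least their fundamental functions are controlled'' is exactly the point at issue: hypothesis \eqref{E:ue1} controls only $\varphi_{X_q}(1)$, and without homogeneity nothing a priori relates $\varphi_{X_q}(t)$ to $\varphi_{X_q}(1)$ by a $q$-independent factor --- unless you use a general fact about r.i.~spaces. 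That general fact is what the paper uses: since the fundamental function of any r.i.~space is quasi-concave ($\varphi_{X_q}$ nondecreasing and $\varphi_{X_q}(t)/t$ nonincreasing), one has $\varphi_{X_q}(t)\le\max\{1,t\}\,\varphi_{X_q}(1)$ for all $t>0$; equivalently, $\|\chi_E\|_{X_q}\le\varphi_{X_q}(1)\,\|\chi_E\|_{L^1\cap L^\infty}$ (see \cite[Theorem~II.6.6]{BS} and its proof). Combining this with \eqref{E:ue1} gives $\|\chi_E\|_{\Delta_{\QQ,\varrho}}\le C_1\max\{1,\lambda(E)\}<\infty$ with $C_1=\sup_{q\in\QQ}\varrho(q)\varphi_{X_q}(1)$, which is (P4) in full generality. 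With this repair your proof is complete; as written, it only establishes the first assertion under the extra homogeneity hypothesis.
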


\begin{proof}
	It is straightforward to check that $\|\cdot\|_{\Delta_{\QQ, \varrho}}$ satisfies conditions (P1)--(P3) and (P6) from Definition~\ref{D:ri-space}. Let us prove the remaining ones.
	
	Take a measurable $E\subset[0,\infty)$ with $\lambda(E)<\infty$. Denote by $C_1$ the term in~\eqref{E:ue1}. Recall that, for each $q\in\N$, it holds that $\|\chi_E\|_{X_q}\le\varphi_{X_q}(1)\|\chi_E\|_{L^1\cap L^\infty}$ (see \cite[Theorem~II.6.6]{BS} and its proof there).	
	We have
		\[
			\|\chi_E\|_{\Delta_{\QQ, \varrho}}  = \sup_{q\in\QQ} \varrho(q) \|\chi_E\|_{X_q} \le C_1\|\chi_E\|_{L^1\cap L^\infty} = C_1\max\{1,\lambda(E)\}<\infty,
		\]
	therefore (P4) holds. Furthermore take an arbitrary $q_0\in\QQ$. Since $\|\cdot\|_{X_{q_0}}$ satisfies (P5), there exists $C_2\in(0,\infty)$ such that
		\[
			\int_E |f(x)|\dx \le C_2 \|f\|_{X_{q_0}}
		\]
	holds for every function $f\in X_{q_0}$. Recalling that $\Delta_{\QQ, \varrho}\subset X_{q_0}$, for every $f\in \Delta_{\QQ, \varrho}$ we have
		\[
			\int_E |f(x)|\dx \le C_2 \|f\|_{X_{q_0}} \le \frac{C_2}{\varrho(q_0)} \sup_{q\in\QQ} \varrho(q)\|f\|_{X_q} = \frac{C_2}{\varrho(q_0)}\|f\|_{\Delta_{\QQ, \varrho}}.
		\]
	Thus, $\|\cdot\|_{\Delta_{\QQ, \varrho}}$ satisfies (P5) as well, and therefore it is an r.i.~norm.
	
	Finally, suppose that $p\in(1,\infty)$ and $\|\cdot\|_{X_q}$ is $p$-homogeneous for each $q\in\QQ$. Then for any $f\in \Delta_{\QQ, \varrho}$ and $r>0$ we get
		\[
			\|D_r f\|_{\Delta_{\QQ, \varrho}} = \sup_{q\in\QQ} \varrho(q)r^{-\frac1p}\|f\|_{X_q} = r^{-\frac1p}\|f\|_{\Delta_{\QQ, \varrho}},
		\]
	hence, the norm $\|\cdot\|_{\Delta_{\QQ, \varrho}}$ is $p$-homogeneous.
\end{proof}

The previously introduced construction may  now be used to produce a~$p$-homogeneous r.i.~norm such that the space generated by it differs from any Lorentz space.

\begin{prop}\label{nonlorentz}
	Let $p\in(1,\infty)$ and $Q\in[1,\infty)$. Define
		\[
			g(t) = \begin{cases}
				t^{-\frac1p}(-\log t)^{-\frac1Q}, & t\in(0,\e^{-\frac pQ});\\[2pt]
				0, & t\ge \e^{-\frac pQ}.
					\end{cases}  
		\]
	For each $n\in\N$, let $X_n = L^{p,\,Q+\frac1n}$ and $\varrho(n)=\|g\|_{X_n}^{-1}.$		
	Then, $\|\cdot\|_{\Delta_{\N,\varrho}}$ is a~$p$-homogeneous rearrangement-invariant norm and hence
		\begin{enumerate}[label={\rm(\roman*)},itemsep=5pt]
			\item 
				$L^{p,q}\subset \Delta_{\N,\varrho}$, for all $q\in[1,Q]$;
			\item
				$\Delta_{\N,\varrho}\subset L^{p,q}$, for all $q\in(Q,\infty]$;
			\item
				$L^{p,q}\ne \Delta_{\N,\varrho}$, for all $q\in[1,\infty]$.
		\end{enumerate}
\end{prop}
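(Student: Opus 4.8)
The plan is to combine Theorem~\ref{T:Delta-ri} with a careful analysis of how the functional $\|\cdot\|_{\Delta_{\N,\varrho}}$ compares with the Lorentz norms $\|\cdot\|_{p,q}$, using the special function $g$ as a test case to separate $\Delta_{\N,\varrho}$ from $L^{p,q}$ for $q\le Q$. First I would verify the hypotheses of Theorem~\ref{T:Delta-ri}: each $X_n = L^{p,\,Q+1/n}$ is $p$-homogeneous by Proposition~\ref{P:Lpq-hmg}, and $\varphi_{X_n}(1)\approx 1$ uniformly in $n$ (by Corollary~\ref{C:ff-of-equiv-hmg}, or directly computing the fundamental function of $L^{p,q}$), while $\varrho(n) = \|g\|_{X_n}^{-1}$; so \eqref{E:ue1} amounts to a uniform lower bound $\|g\|_{p,\,Q+1/n}\gtrsim 1$, which follows since, e.g., restricting the defining integral for $\|g\|_{p,q}$ to a fixed interval $(0,\delta)$ with $\delta<\e^{-p/Q}$ gives a value bounded below independently of $q$ near $Q$. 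Granting this, Theorem~\ref{T:Delta-ri} immediately yields that $\|\cdot\|_{\Delta_{\N,\varrho}}$ is a $p$-homogeneous r.i.~norm, settling the first assertion.

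For part (i), I would show $L^{p,q}\subset\Delta_{\N,\varrho}$ for $q\in[1,Q]$, i.e.\ that $\|f\|_{\Delta_{\N,\varrho}}\lesssim\|f\|_{p,q}$. Fix $q\le Q$; then for every $n$ we have $q\le Q < Q+\frac1n$, so by the standard nesting of Lorentz spaces (for fixed $p$, $L^{p,q}\hookrightarrow L^{p,s}$ whenever $q\le s$, with embedding constant bounded as $s$ ranges over a bounded set — or one can absorb this into $\approx$ using \eqref{Lpq-equiv}) we get $\|f\|_{X_n}=\|f\|_{(p,\,Q+1/n)}\lesssim\|f\|_{p,\,Q+1/n}\lesssim\|f\|_{p,q}$ uniformly in $n$. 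Multiplying by $\varrho(n)$ and taking the supremum, and using $\sup_n\varrho(n)<\infty$ (which is essentially the same uniform lower bound on $\|g\|_{X_n}$ as above), gives $\|f\|_{\Delta_{\N,\varrho}}\lesssim\|f\|_{p,q}$, hence the inclusion. For part (ii), I would show $\Delta_{\N,\varrho}\subset L^{p,q}$ for $q\in(Q,\infty]$. Given such $q$, pick $n$ large enough that $Q+\frac1n<q$ (or $q=\infty$); then $\|f\|_{p,q}\lesssim\|f\|_{p,\,Q+1/n}\approx\|f\|_{X_n}\lesssim\varrho(n)^{-1}\|f\|_{\Delta_{\N,\varrho}}$, using nesting in the other direction and the fact that a single term in the supremum defining $\|\cdot\|_{\Delta_{\N,\varrho}}$ dominates $\varrho(n)\|f\|_{X_n}$. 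This gives the inclusion with a $q$-dependent constant, which is all that is required.

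Part (iii) is where the real content lies, and I expect the $q\le Q$ case to be the main obstacle: I must exhibit a function in $\Delta_{\N,\varrho}$ that does not lie in $L^{p,q}$ for any $q\le Q$. The natural candidate is $g$ itself. On one hand, $\|g\|_{\Delta_{\N,\varrho}} = \sup_n\varrho(n)\|g\|_{X_n} = \sup_n 1 = 1 <\infty$ by the very choice $\varrho(n)=\|g\|_{X_n}^{-1}$, so $g\in\Delta_{\N,\varrho}$. On the other hand, $g$ is already nonincreasing (one should check monotonicity of $t^{-1/p}(-\log t)^{-1/Q}$ on $(0,\e^{-p/Q})$, which holds since the dominant factor $t^{-1/p}$ increases as $t\downarrow 0$), so $g^* = g$ near the origin, and for $q\le Q$ the Lorentz norm $\|g\|_{p,q}^q \approx \int_0^{\delta} (-\log t)^{-q/Q}\,\frac{\dt}{t} = \int_{-\log\delta}^\infty s^{-q/Q}\,\ds$, which diverges precisely because $q/Q\le 1$. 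Hence $g\notin L^{p,q}$ for every $q\in[1,Q]$, so $L^{p,q}\ne\Delta_{\N,\varrho}$ in that range. For $q\in(Q,\infty]$, by part (ii) $\Delta_{\N,\varrho}\subset L^{p,q}$, and equality would force $L^{p,q}\subset\Delta_{\N,\varrho}$; but then by part (i) (applied with, say, $q'=Q$, using $L^{p,Q}\subset\Delta_{\N,\varrho}\subset L^{p,q}$ already known, this does not immediately close the loop) — more directly, if $\Delta_{\N,\varrho}=L^{p,q}$ for some $q>Q$ then $g\in L^{p,q}$ (true) but also, by part (i), $L^{p,Q}\subset\Delta_{\N,\varrho}=L^{p,q}$, forcing $L^{p,Q}=L^{p,q}$ since $Q<q$, which is false (Lorentz spaces with the same $p$ and different second indices are genuinely distinct). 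Thus (iii) holds for all $q\in[1,\infty]$, and a clean way to organize the write-up is: prove (i) and (ii) first, then deduce (iii) from (i), (ii), and the single computation that $g\in\Delta_{\N,\varrho}\setminus\bigcup_{q\le Q}L^{p,q}$, together with the standard strictness of the Lorentz scale.
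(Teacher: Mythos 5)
Your overall route is the same as the paper's: verify \eqref{E:ue1} via a uniform lower bound on $\|g\|_{X_n}$ together with the uniform boundedness of $\varphi_{X_n}(1)$, invoke Theorem~\ref{T:Delta-ri} and Proposition~\ref{P:Lpq-hmg}, get (i) and (ii) from the nested Lorentz scale with controlled embedding constants, and use $g$ itself (which is decreasing on $(0,\e^{-p/Q})$ precisely because of the cutoff, so $g=\g$) as the witness that $\Delta_{\N,\varrho}\ne L^{p,q}$ for $q\le Q$. All of that is correct, including the divergence $\|g\|_{p,q}^q=\int_0^{\e^{-p/Q}}(-\log t)^{-q/Q}\,\frac{\dt}{t}=\infty$ for $q\le Q$ and the membership $\|g\|_{\Delta_{\N,\varrho}}=1$.

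There is, however, a genuine logical gap in your treatment of (iii) for $q\in(Q,\infty]$. You argue: if $\Delta_{\N,\varrho}=L^{p,q}$ with $q>Q$, then by (i) $L^{p,Q}\subset\Delta_{\N,\varrho}=L^{p,q}$, ``forcing $L^{p,Q}=L^{p,q}$.'' It forces no such thing: the inclusion $L^{p,Q}\subset L^{p,q}$ holds unconditionally for $Q<q$ and is perfectly consistent with being strict, so no contradiction is reached. The correct argument — and the one the paper uses — goes through part (ii) in the opposite direction: since $\Delta_{\N,\varrho}\subset L^{p,Q+\frac1n}$ for every $n$, choosing $n$ with $Q+\frac1n<q$ and assuming $\Delta_{\N,\varrho}=L^{p,q}$ would give $L^{p,q}\subset L^{p,Q+\frac1n}$, contradicting the strictness of the inclusion $L^{p,Q+\frac1n}\subsetneq L^{p,q}$ from \eqref{Lpq-emb-c}. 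This is a one-line repair using material you already have, but as written your argument for that range of $q$ does not close.
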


\begin{proof}
	Observe that
		\[
			\frac{\mathrm{d}g}{\mathrm{d}t}(t)=t^{-\frac1p-1}(-\log t)^{-\frac1Q-1}\left(\frac{\log t}p + \frac1Q \right)<0,
		\]
	for all $t\in(0,\e^{-\frac pQ})$. Hence, the function $g$ is nondecreasing and $g=\g$. For each $n\in\N$, by \eqref{Lpq-equiv} one has
		\[
			\|g\|_{X_n}\le p' \|g\|_{p,\,Q+\frac1n} = p'\left( \int_0^{\e^{-\frac pQ}} \hspace{-10pt} \frac{\mathrm{d}t}{t(-\log t)^{1+\frac{1}{Qn}}} \right)^{\frac{n}{Qn+1}} = p'p^{-\frac1{Q(Qn+1)}} Q^{\frac{1}Q} n^{\frac{n}{Qn+1}} <\infty.
		\] 
	The last term is positive for all $n\in\N$ and tends to infinity when $n\to\infty$, hence it is bounded from below by a~uniform constant $\eps>0$ for all~$n$. Thus, we get 
		\begin{alignat*}{1}
			\sup_{n\in\N} \varrho(n) \varphi_{X_n}(1)& =  \sup_{n\in\N} \|g\|^{-1}_{X_n}\left( \frac{p^2n}{(Qn+1)(p-1)} \right)^{\frac n{Qn+1}} \\
			&\le \sup_{n\in\N} \|g\|^{-1}_{p,\,Q+\frac1n}\frac{p^2}{p-1} \le \frac{p^2}{\eps(p-1)}<\infty.
		\end{alignat*}
	Theorem \ref{T:Delta-ri} and Proposition \ref{P:Lpq-hmg} now yield that $\|\cdot\|_{\Delta_{\N,\varrho}}$ is a $p$-homogeneous r.i.~norm.  
	
	For every $q,r\in(1,\infty)$ such that $q<r$ it holds that (cf.~\cite[Proposition~IV.4.2]{BS} and \eqref{Lpq-equiv})
		\begin{equation}\label{Lpq-emb-c}
			L^{p,q} \subsetneq L^{p,r} \quad \text{ and } \quad \|\cdot\|_{(p,r)} \le \left( \frac{p^2}{q(p-1)} \right)^{\frac1q-\frac1r} \|\cdot\|_{(p,q)}.
		\end{equation}
	It is readily seen from the definition of $\Delta_{N,\varrho}$ that $\Delta_{N,\varrho} \subset \bigcap_{n\in\N}L^{p,Q+\frac1n}$. From this, taking \eqref{Lpq-emb-c} into account, we obtain (ii). 
	
	Next, for every $f\in L^{p,Q}$ we have, using \eqref{Lpq-emb-c} again,
		\begin{alignat*}{1}
			\|f\|_{\Delta_{N,\varrho}} & = \sup_{n\in\N} \varrho(n) \|f\|_{(p,Q+\frac1n)} \le \frac1\eps \sup_{n\in\N} \left( \frac{p^2n}{(Qn+1)(p-1)} \right)^{\frac 1{Qn+1}} \|f\|_{(p,Q)}\\ 
            & \le \frac{p^2}{\eps (p-1)} \|f\|_{(p,Q)}.
		\end{alignat*}
	This yields $L^{p,Q} \subset \Delta_{N,\varrho}$ and in turn also (i).
	
	Finally, notice that
		\[
			\|g\|_{(p,Q)} \ge \|g\|_{p,Q} = \left( \int_0^{\e^{-\frac pQ}} \hspace{-10pt} \frac{\mathrm{d}t}{-t\log t} \right)^{\frac1Q} = \infty,
		\]	 
	while, obviously, $\|g\|_{\Delta_{N,\varrho}} = 1$. It follows that $g\in \Delta_{N,\varrho} \setminus L^{p,Q}$, and thus $\Delta_{N,\varrho}\ne L^{p,q}$ for any $q\in[1,Q]$. On the other hand, for every $q\in(Q,\infty)$, there is $n\in\N$ such that $Q+\frac1n < q$. Hence, by \eqref{Lpq-emb-c}, $L^{p,q}\not\subset L^{p,Q+\frac 1n}$, and thus also $L^{p,q}\not\subset \Delta_{N,\varrho}$. This completes the proof of (iii).
\end{proof}

In  Proposition~\ref{nonlorentz} we have indeed just found an example of an r.i.~space with $p$-homogeneous norm which is not a~Lorentz space. However, it still satisfies conditions (i) and (ii), and hence it is embedded on the scale of the  $L^{p,q}$ spaces. Nevertheless, r.i.~spaces with $p$-homogeneous norms need not have this property either, as shown in the next section.

\section{Orlicz--Lorentz spaces}

At this point we will consider a further, more general r.i.~space derived from the Orlicz spaces. For a detailed treatment of Orlicz spaces we refer to \cite{KR,PKJF,RR}.

\begin{definition}\label{D:Young}
	A \emph{Young function} is any function $\Phi\colon [0,\infty)\to[0,\infty)$ which is convex, (strictly) increasing and satisfies $\Phi(0)=0$. We say that a Young function $\Phi$ satisfies the \emph{$\Delta_2$ condition} (and write $\Phi\in\Delta_2$) if there exists a constant $C>0$ such that
		\[
			\Phi(2t)\le C\Phi(t),
		\]
	for all $t>0$.
\end{definition}

\begin{definition}\label{D:OL}
	Let $\Phi$ be a Young function and $p\in(1,\infty)$. For every $f\in\M([0,\infty))$ we define 
    		\begin{equation}\label{E:modular}
		    \varrho_{p,\Phi}(f) = \int_0^\infty \Phi(t^{\frac1p}\ff(t))\dtt
		\end{equation}
	and 
    	\begin{equation}\label{E:Lux}
			\|f\|_{p,\Phi} = \inf\left\{ \lambda>0 \vt \varrho_{p,\Phi}(f/\lambda) \le 1\right\}.
		\end{equation}
	Furthermore, the \emph{Orlicz--Lorentz class} $\widetilde{\mathcal L}^{p,\Phi}$ and \emph{Orlicz--Lorentz space} $L^{p,\Phi}$ are given by
		\begin{alignat*}{1}
			\widetilde{\mathcal L}^{p,\Phi} & = \left\{ f\in\M([0,\infty))\vt \varrho_{p,\Phi}(f)<\infty \right\},\\
			L^{p,\Phi} & = \left\{ f\in\M([0,\infty))\vt \|f\|_{p,\Phi}<\infty \right\},
		\end{alignat*}
    respectively.
\end{definition}

One should note that the term ``Orlicz--Lorentz space'' is used in existing literature to describe various objects, rather notably different from each other. We are using this term  as in \cite{To}, where further details on these structures may be found. 

Naturally, the $L^{p,q}$ spaces are obtained as a particular case of these spaces by taking the Young function $\Phi(x)=x^q$ for $x\ge 0$.  

In accordance with the standard terminology of the Orlicz spaces, we may call \eqref{E:modular} the \emph{Orlicz--Lorentz modular}, and \eqref{E:Lux} the \emph{Luxemburg norm} of the Orlicz--Lorentz space.

\begin{prop}\label{P:OL-space}
	Let $\Phi$ be a Young function and $p\in(1,\infty)$. 
    Then $\|\cdot\|_{p,\Phi}$ is a rearrangement-invariant norm. Moreover, if $\Phi\in\Delta_2$, then $\widetilde{\mathcal L}^{p,\Phi}=L^{p,\Phi}$.
\end{prop}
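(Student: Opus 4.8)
The plan is to verify that $\|\cdot\|_{p,\Phi}$ satisfies the six axioms (P1)--(P6) of Definition~\ref{D:ri-space}, using the structure of the Luxemburg norm built on the modular $\varrho_{p,\Phi}$. The key point is that the functional $f\mapsto \varrho_{p,\Phi}(f)$ depends on $f$ only through $\ff$, which is itself rearrangement-invariant and monotone in the pointwise ordering of nonnegative functions; combined with the convexity of $\Phi$, this is exactly the machinery that makes an Orlicz-type Luxemburg norm a norm. First I would record the elementary properties of $\varrho_{p,\Phi}$: it is nonnegative, $\varrho_{p,\Phi}(0)=0$, it vanishes only for $f=0$ a.e.\ (since $\Phi$ is strictly increasing and $\ff$ controls $f$), it is nondecreasing along $0\le g\le f$ a.e.\ (because then $\g\le\ff$ pointwise), it depends only on $\f$ hence gives (P6), and it is convex in $f$ thanks to convexity of $\Phi$ together with subadditivity of the maximal operator $f\mapsto \ff$ (i.e.\ $(f+g)^{**}\le \ff+\g$, which follows from $(f+g)^*(s)\le \f(s/2)+\g(s/2)$ or directly from the definition of $\ff$ as a supremum of averages). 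From convexity of the modular and the standard Luxemburg argument one gets the triangle inequality in (P1); positive homogeneity $\|af\|_{p,\Phi}=|a|\|f\|_{p,\Phi}$ is immediate from the definition; and $\|f\|_{p,\Phi}=0\iff f=0$ a.e.\ follows from the faithfulness of the modular plus the fact that $\varrho_{p,\Phi}(f/\lambda)\to 0$ is not automatic but $\|f\|_{p,\Phi}<\infty$ with $\varrho_{p,\Phi}(f/\lambda)\le 1$ forces $f=0$ when $\|f\|_{p,\Phi}=0$ by letting $\lambda\to 0$ and using that $\ff>0$ on a set of positive $\frac{dt}{t}$-measure otherwise.

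Next I would dispatch (P2) and (P3). Property (P2) follows since $0\le g\le f$ a.e.\ gives $\g\le\ff$ pointwise, hence $\varrho_{p,\Phi}(g/\lambda)\le\varrho_{p,\Phi}(f/\lambda)$ for every $\lambda>0$, so the infimum defining $\|g\|_{p,\Phi}$ is over a larger set and $\|g\|_{p,\Phi}\le\|f\|_{p,\Phi}$. For the Fatou property (P3), if $0\le f_n\uparrow f$ a.e.\ then $f_n^*\uparrow \f$ and $f_n^{**}\uparrow\ff$ pointwise by monotone convergence, so $\varrho_{p,\Phi}(f_n/\lambda)\uparrow\varrho_{p,\Phi}(f/\lambda)$ for each fixed $\lambda$ (again monotone convergence, using continuity of $\Phi$), and a short argument on infima yields $\|f_n\|_{p,\Phi}\uparrow\|f\|_{p,\Phi}$. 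For (P4), it suffices to compute or estimate $\|\chi_E\|_{p,\Phi}$ for $\lambda(E)=t<\infty$: here $\chi_E^{**}(s)=\min\{1,t/s\}$, so $\varrho_{p,\Phi}(\chi_E/\lambda)=\int_0^\infty \Phi\big(\lambda^{-1}s^{1/p}\min\{1,t/s\}\big)\frac{ds}{s}$, and since the integrand is $\Phi(\lambda^{-1}s^{1/p})$ for $s\le t$ and $\Phi(\lambda^{-1}t s^{1/p-1})$ for $s>t$, both tails converge (the first at $0$ because $\Phi$ is increasing with $\Phi(0)=0$ and $s^{1/p}\to 0$ faster than $\frac{ds}{s}$ blows up — more precisely $\int_0^t\Phi(\lambda^{-1}s^{1/p})\frac{ds}{s}<\infty$ by the substitution $u=s^{1/p}$ giving $p\int_0^{t^{1/p}}\Phi(\lambda^{-1}u)\frac{du}{u}<\infty$ since $\Phi(\lambda^{-1}u)/u\to$ bounded as $u\to0$ by convexity; and the second tail converges at $\infty$ because $1/p-1<0$). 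Hence for $\lambda$ large enough the modular is $\le 1$ and $\|\chi_E\|_{p,\Phi}<\infty$. Property (P5) then follows from (P4) by the general principle \cite[Theorem~II.6.6]{BS}: any norm satisfying (P1)--(P4) and (P6) on $([0,\infty),\lambda)$ automatically controls local $L^1$ norms, or one can argue directly that $\int_E|f| = \int_0^t f^*(s)\,ds \le tf^{**}(t)$ and bound $f^{**}(t)$ in terms of $\|f\|_{p,\Phi}$ using the fundamental-function estimate from the associate space.

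For the second assertion, assume $\Phi\in\Delta_2$ and show $\widetilde{\mathcal L}^{p,\Phi}=L^{p,\Phi}$. The inclusion $\widetilde{\mathcal L}^{p,\Phi}\subset L^{p,\Phi}$ is trivial since $\varrho_{p,\Phi}(f)\le 1$ already gives $\|f\|_{p,\Phi}\le 1$, and more generally $\varrho_{p,\Phi}(f)<\infty$ together with $\Delta_2$ lets one scale: if $\varrho_{p,\Phi}(f)<\infty$ then for $\lambda=2^k$ with $k$ large, $\varrho_{p,\Phi}(f/2^k)\le C^{-k}\varrho_{p,\Phi}(f)\le 1$ by iterating $\Phi(2t)\le C\Phi(t)$ in the form $\Phi(t/2)\ge C^{-1}\Phi(t)$... wait, one needs the $\Delta_2$ inequality in the direction that makes $\varrho_{p,\Phi}(f/\lambda)$ small — this is automatic since $\Phi$ is increasing, so $\varrho_{p,\Phi}(f/\lambda)\le\varrho_{p,\Phi}(f)<\infty$ for $\lambda\ge 1$ and then dominated convergence ($\Phi$ continuous, $\Phi(\lambda^{-1}t^{1/p}\ff(t))\downarrow 0$ pointwise as $\lambda\to\infty$, dominated by the integrable $\Phi(t^{1/p}\ff(t))$) gives $\varrho_{p,\Phi}(f/\lambda)\to 0$, hence $\|f\|_{p,\Phi}<\infty$; note this direction does not even need $\Delta_2$. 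The substantive inclusion is $L^{p,\Phi}\subset\widetilde{\mathcal L}^{p,\Phi}$: if $\|f\|_{p,\Phi}<\infty$, pick $\lambda>\|f\|_{p,\Phi}$ with $\varrho_{p,\Phi}(f/\lambda)\le 1$, choose an integer $k$ with $2^k\ge\lambda$, and use $\Delta_2$ iteratively: $\Phi(2^k s)\le C^k\Phi(s)$, so $\varrho_{p,\Phi}(f)=\varrho_{p,\Phi}\big(2^k\cdot(f/2^k)\big)\le C^k\varrho_{p,\Phi}(f/2^k)\le C^k\varrho_{p,\Phi}(f/\lambda)\le C^k<\infty$, where the monotonicity $\varrho_{p,\Phi}(f/2^k)\le\varrho_{p,\Phi}(f/\lambda)$ uses $2^k\ge\lambda$ and that $\Phi$ is increasing. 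This gives $f\in\widetilde{\mathcal L}^{p,\Phi}$ and completes the identification. I expect the main obstacle to be the convexity of the modular (and hence the triangle inequality), which hinges on the subadditivity $(f+g)^{**}\le\ff+\g$ — this is the one non-formal input and should be invoked from \cite{BS} (it is \cite[Theorem~II.3.4]{BS} or the discussion around the maximal function) rather than reproved; everything else is bookkeeping with monotone/dominated convergence and the $\Delta_2$ scaling.
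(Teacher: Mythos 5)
Your proof follows essentially the same route as the paper's: the triangle inequality via $(f+g)^{**}\le f^{**}+g^{**}$ and the convexity of $\Phi$ through the standard Luxemburg mechanism (the paper outsources this to a version of \cite[Theorem~IV.8.9]{BS} adapted to the measure $\frac{\mathrm{d}t}{t}$, while you redo it by hand --- either is fine), the same splitting of the modular of $\chi_E$ at $\lambda(E)$ for (P4), and the same $\Delta_2$ scaling $\Phi(2^k s)\le C^k\Phi(s)$ for the identification $\widetilde{\mathcal L}^{p,\Phi}=L^{p,\Phi}$. The one step you should not lean on is deriving (P5) from \cite[Theorem~II.6.6]{BS}: that theorem concerns spaces already assumed to satisfy all of (P1)--(P6), and its proof of $X\hookrightarrow L^1+L^\infty$ is precisely an application of (P5), so the citation is circular. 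Your fallback direct argument is the correct one (and is what the paper actually does); to close it, note that if $\varrho_{p,\Phi}(f/\lambda)\le1$ and $x=\lambda(E)$, then, since $f^{**}$ is nonincreasing,
\[
p\int_0^{x^{1/p}f^{**}(x)/\lambda}\Phi(u)\,\frac{\mathrm{d}u}{u}
=\int_0^{x}\Phi\Bigl(\frac{t^{1/p}f^{**}(x)}{\lambda}\Bigr)\,\frac{\mathrm{d}t}{t}
\le\varrho_{p,\Phi}(f/\lambda)\le 1,
\]
and since $u\mapsto\int_0^u\Phi(s)\,\frac{\mathrm{d}s}{s}$ increases to infinity, this bounds $x^{1/p}f^{**}(x)/\lambda$ by a constant depending only on $\Phi$ and $p$; hence $\int_E|f|\le x f^{**}(x)\le C_x\|f\|_{p,\Phi}$. (The paper reaches the same conclusion by showing $\varrho_{p,\Phi}\bigl(Cf/\int_E|f|\bigr)\ge1$ for a suitable $C=C_x$, using the same monotonicity of $f^{**}$ together with convexity of $\Phi$.) With that repair the proposal is complete; the rest of your argument, including the $\Delta_2$ part, is sound.
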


\begin{proof}
    We need to verify that $\cdn_{p,\Phi}$ satisfies the conditions from Definition~\ref{D:ri-space}.
    For every $t>0$ and every $f,g\in\M([0,\infty))$ it holds that $(f+g)^{**}(t) \le \ff(t)+g^{**}(t)$ (see \cite[Theorem II.3.4]{BS}). Hence, properties (P1)--(P3) follow from elementary properties of the rearrangement and a~slight modification of \cite[Theorem IV.8.9]{BS}. Namely, we need to use the cited theorem for a~classical Orlicz space built upon the measure $\dtt$ instead of the Lebesgue measure considered in~\cite{BS}. The proof of its part concerning (P1)--(P3) remains virtually unchanged if the measure is replaced in that way. 

    As next, consider $E\subset[0,\infty)$ of finite measure and denote $x=\lambda(E)<\infty$. Since $\Phi$ is positive on $(0,\infty)$, the term $\Phi(s)/s$ in nonincreasing in~$s$, and thus we get
        \begin{alignat*}{1}
            \varrho_{p,\Phi}(\chi_E) & = \int_0^x \Phi(t^\frac1p) \dtt + \int_x^\infty \Phi(xt^{\frac1p-1}) \dtt \\
            & = \int_0^x \frac{\Phi(t^\frac1p)} {t^{\frac1p}} t^{\frac1p-1} \dt + x \int_x^\infty \frac{\Phi(xt^{\frac1p-1})}{xt^{\frac1p-1}} t^{\frac1p-2}\dt \\
            & \le \frac{\Phi(x^\frac1p)}{x^{\frac1p}} \int_0^x  t^{\frac1p-1} \dt + \frac{\Phi(x^{\frac1p})}{x^{\frac1p-1}} \int_x^\infty t^{\frac1p-2}\dt <\infty.
        \end{alignat*}
    By the dominated convergence theorem one deduces that $\|\chi_E\|_{p,\Phi} < \infty$, hence (P4) is satisfied.

    To prove (P5), let $E$ and $x$ be as before. Set
        \[
            C = \begin{cases}
                x, & \text{if } \int_0^x \Phi(t^\frac1p) \dtt \ge 1;\\[3pt]
                \left( \frac1x \int_0^x \Phi(t^\frac1p) \dtt \right)^{-1}, & \text{else}.
            \end{cases}
        \]
    Let $f\in\M([0,\infty)).$ Then 
        \[
            \int_E f(s) \ds \le \int_0^x \f(t)\dt = x\ff(x).
        \]
    If $\int_0^x \Phi(t^\frac1p) \dtt \ge 1$,
    since $\ff$ is nonincreasing, we get
        \begin{alignat*}{1}
            \varrho_{p,\Phi}\left( \frac{C f}{ \int_E f(s) \ds } \right)
            & \ge \varrho_{p,\Phi}\left( \frac{C f}{x\ff(x)} \right) 
            \ge \int_0^x \Phi\left( \frac{ C t^\frac1p \ff(t) }{ x\ff(x) } \right) \dtt \\
            & = \int_0^x \Phi\left( \frac{ t^\frac1p \ff(t) }{ \ff(x) } \right) \dtt \ge \int_0^x \Phi(t^\frac1p) \dtt \ge 1.
        \end{alignat*}
    If $\int_0^x \Phi(t^\frac1p) \dtt < 1,$ we have that $C/x>1$, therefore by convexity of~$\Phi$ one gets the following:
        \begin{alignat*}{1}
            \varrho_{p,\Phi}\left( \frac{C f}{ \int_E f(s) \ds } \right)
            & \ge \varrho_{p,\Phi}\left( \frac{C f}{x\ff(x)} \right) 
            \ge \int_0^x \Phi\left( \frac{ C t^\frac1p \ff(t) }{ x\ff(x) } \right) \dtt \\
            & \ge \frac Cx \int_0^x \Phi\left( \frac{ t^\frac1p \ff(t) }{ \ff(x) } \right) \dtt \ge \frac Cx \int_0^x \Phi(t^\frac1p) \dtt = 1.
        \end{alignat*}
    Altogether, we have $\varrho_{p,\Phi}\left( \frac{C f}{ \int_E f(s) \ds } \right) \ge 1$, hence $\frac1C \int_E f(s)\ds \le \|f\|$, which gives~(P5).

    Property (P6) is obvious. Thus we have shown that $\cdn_{p,\Phi}$ is an r.i.~norm.

    The remaining claim that $\widetilde{\mathcal L}^{p,\Phi}=L^{p,\Phi}$, given that $\Phi\in\Delta_2$, follows from the corresponding property of Orlicz classes (see e.g.~\cite[Proposition 4.12.3]{PKJF} and \cite[Theorem IV.8.14]{BS}).
\end{proof}

Naturally, our interest in the Orlicz--Lorentz spaces $L^{p,\Phi}$ stems from the fact that their norms are $p$-homogeneous. This is shown in the following theorem.

\begin{theorem}\label{T:OL-hmg}
	Let $\Phi$ be a Young function and $p\in(1,\infty)$. Then, $\|\cdot\|_{p,\Phi}$ is a $p$-homogeneous rearrangement-invariant norm. Moreover, the fundamental function of the rearrangement-invariant space $L^{p,\Phi}$ is given by
		\[
			\phi_{L^{p,\Phi}}(t) = C_0\, t^{\frac1p},
		\]
	where $C_0>0$ is the unique solution to the equation
		\begin{equation}\label{E:fund-C}
			\int_0^{\frac1{C_0}} \frac{\Phi(s)}s \ds = \frac1p.
		\end{equation}
\end{theorem}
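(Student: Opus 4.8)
The plan is to prove the $p$-homogeneity first, then compute the fundamental function. For $p$-homogeneity I would use Proposition~\ref{T:upper-enough}, so it suffices to show $\|D_rf\|_{p,\Phi}\le r^{-\frac1p}\|f\|_{p,\Phi}$ for all $r>0$ and $f\in L^{p,\Phi}$. The key observation is the scaling behavior of the modular $\varrho_{p,\Phi}$ under dilation together with the well-known relation $(D_rf)^{*}(t)=f^{*}(rt)=D_r(f^{*})(t)$ and hence $(D_rf)^{**}(t)=\frac1t\int_0^t f^{*}(rs)\,\mathrm ds = \frac1{rt}\int_0^{rt} f^{*}(u)\,\mathrm du = f^{**}(rt)$. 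Plugging this into \eqref{E:modular} and substituting $u=rt$ gives
	\[
		\varrho_{p,\Phi}(D_rf) = \int_0^\infty \Phi\big(t^{\frac1p}(D_rf)^{**}(t)\big)\,\dtt = \int_0^\infty \Phi\big(t^{\frac1p}f^{**}(rt)\big)\,\dtt = \int_0^\infty \Phi\big(r^{-\frac1p}u^{\frac1p}f^{**}(u)\big)\,\dss
	\]
(with $s=u$, using invariance of $\frac{\mathrm du}{u}$ under scaling). This shows $\varrho_{p,\Phi}(D_rf)=\varrho_{p,\Phi}(r^{-\frac1p}f)$, and therefore directly from the definition of the Luxemburg norm \eqref{E:Lux} that $\|D_rf\|_{p,\Phi}=r^{-\frac1p}\|f\|_{p,\Phi}$. (In fact this gives the exact equality, not just the inequality needed.) Since Proposition~\ref{P:OL-space} already establishes that $\cdn_{p,\Phi}$ is an r.i.~norm, the $p$-homogeneity follows, and in particular by Proposition~\ref{T:ff-of-hmg} the fundamental function must be of the form $\phi_{L^{p,\Phi}}(t)=\phi_{L^{p,\Phi}}(1)\,t^{\frac1p}$.

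It remains to identify the constant $C_0=\phi_{L^{p,\Phi}}(1)=\|\chi_E\|_{p,\Phi}$ for $E$ with $\lambda(E)=1$. For such $E$, $\chi_E^{*}=\chi_{[0,1)}$ and $\chi_E^{**}(t)=\min\{1,1/t\}$. Thus for $\lambda>0$,
	\[
		\varrho_{p,\Phi}(\chi_E/\lambda) = \int_0^1 \Phi\Big(\frac{t^{\frac1p}}{\lambda}\Big)\,\dtt + \int_1^\infty \Phi\Big(\frac{t^{\frac1p-1}}{\lambda}\Big)\,\dtt.
	\]
In the first integral substitute $s=t^{\frac1p}/\lambda$, so $t=(\lambda s)^p$ and $\dtt = p\,\dss$, with $t\in(0,1)$ corresponding to $s\in(0,1/\lambda)$; this gives $p\int_0^{1/\lambda}\frac{\Phi(s)}s\,\mathrm ds$. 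In the second integral substitute $s=t^{\frac1p-1}/\lambda$; since $\frac1p-1<0$, as $t$ runs over $(1,\infty)$, $s$ runs over $(0,1/\lambda)$, and one computes $\dtt = \frac{p}{p-1}\,\dss$, giving $\frac{p}{p-1}\int_0^{1/\lambda}\frac{\Phi(s)}s\,\mathrm ds$. Adding, $\varrho_{p,\Phi}(\chi_E/\lambda) = \big(p+\frac{p}{p-1}\big)\int_0^{1/\lambda}\frac{\Phi(s)}s\,\mathrm ds = \frac{p^2}{p-1}\int_0^{1/\lambda}\frac{\Phi(s)}s\,\mathrm ds$. Hmm — this yields the equation $\int_0^{1/\lambda}\frac{\Phi(s)}s\,\mathrm ds = \frac{p-1}{p^2}$ rather than the stated $\frac1p$; I would recheck the substitution constants against the paper's normalization, but the structure is clear: $C_0=\phi_{L^{p,\Phi}}(1) = 1/\lambda_0$ where $\lambda_0$ is the infimum of $\lambda$ with $\varrho_{p,\Phi}(\chi_E/\lambda)\le1$, i.e.\ $C_0$ is determined by $\int_0^{C_0}\frac{\Phi(s)}s\,\mathrm ds$ equal to the appropriate constant.

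For the existence and uniqueness of $C_0$: the function $g(u)\ceq\int_0^{u}\frac{\Phi(s)}s\,\mathrm ds$ is continuous, strictly increasing on $[0,\infty)$ (since $\Phi>0$ on $(0,\infty)$), satisfies $g(0)=0$, and the integral converges near $0$ because $\Phi(s)/s$ is nonincreasing (hence bounded on bounded intervals) by convexity and $\Phi(0)=0$; moreover $g(u)\to\infty$ as $u\to\infty$ since $\Phi(s)/s$ is bounded below by a positive constant for large $s$. Hence $g$ is a bijection from $[0,\infty)$ onto $[0,\infty)$ and the equation $g(C_0)=\text{const}$ has a unique solution. The main obstacle — such as it is — is bookkeeping the two changes of variables in the second computation correctly and reconciling the resulting constant with equation~\eqref{E:fund-C}; the conceptual content (dilation-invariance of $\frac{\mathrm dt}{t}$ forcing $p$-homogeneity, monotonicity forcing uniqueness) is routine once the scaling identity $\varrho_{p,\Phi}(D_rf)=\varrho_{p,\Phi}(r^{-1/p}f)$ is in hand.
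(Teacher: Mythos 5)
Your proof follows essentially the same route as the paper's: the identity $(D_rf)^{**}(t)=\ff(rt)$ plus the dilation invariance of $\dtt$ give the exact scaling $\varrho_{p,\Phi}(D_rf/\lambda)=\varrho_{p,\Phi}(r^{-1/p}f/\lambda)$, hence $\|D_rf\|_{p,\Phi}=r^{-1/p}\|f\|_{p,\Phi}$ outright (so the appeal to Proposition~\ref{T:upper-enough} is not even needed), and the fundamental function is then found by evaluating the modular on a characteristic function and using strict monotonicity of $u\mapsto\int_0^u\Phi(s)\,\dss$ for existence and uniqueness of $C_0$. All of this is correct.

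The discrepancy you flagged is not an error in your substitutions: your constant is the right one. Since the modular \eqref{E:modular} is built on $\ff$ and $\chi_{[0,a)}^{**}(t)=\min\{1,a/t\}$, the modular of $\chi_{[0,a)}/\lambda$ splits into $p\int_0^{a^{1/p}/\lambda}\Phi(s)\,\dss$ coming from $(0,a)$ plus $p'\int_0^{a^{1/p}/\lambda}\Phi(s)\,\dss$ coming from $(a,\infty)$, so the normalization should read $\int_0^{1/C_0}\frac{\Phi(s)}{s}\ds=\frac1{pp'}=\frac{p-1}{p^2}$. The paper's proof retains only the first piece, i.e.\ it computes as if the modular were defined with $\f$ in place of $\ff$, which is where the value $\frac1p$ in \eqref{E:fund-C} comes from; the form $\phi_{L^{p,\Phi}}(t)=C_0t^{1/p}$ and everything downstream are unaffected. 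A cross-check: for $\Phi(x)=x^q$ one has $\|\cdot\|_{p,\Phi}=\cdn_{(p,q)}$ and $\phi_{L^{p,q}}(1)=\big(\tfrac{p^2}{q(p-1)}\big)^{1/q}$, which is exactly what $\frac1{pp'}$ produces (and is the value the paper itself uses in Proposition~\ref{nonlorentz}), whereas $\frac1p$ would give $(p/q)^{1/q}$. One small correction to your uniqueness argument: for a convex Young function, $\Phi(s)/s$ is \emph{nondecreasing}, not nonincreasing; that is still precisely what you need, since it bounds $\Phi(s)/s$ on $(0,u]$ by $\Phi(u)/u$ and makes $\int_0^u\Phi(s)\,\dss$ finite, continuous, strictly increasing and unbounded.
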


\begin{proof}
	Let $\lambda,r>0$ and $f\in\M([0,\infty))$. First of all, it is easy to see that $(D_rf)^{**}(t)=f^{**}(rt)$, for every $t>0$. Next, by a change of variables we obtain
		\[
			\varrho_{p,\Phi}\left( \frac{D_rf}{r^{-\frac1p}\lambda} \right)
			= \int_0^\infty \Phi\left( \frac{t^\frac1p \ff(rt)}{r^{-\frac1p}\lambda} \right) \dtt
			= \int_0^\infty \Phi\left( \frac{t^\frac1p \ff(t)}{\lambda} \right) \dtt
			= \varrho_{p,\Phi}\left( \frac{f}\lambda \right).
		\]
	Hence, by the definition of the Orlicz--Lorentz norm,
		\begin{alignat*}{1}
			\|D_rf\|_{p,\Phi} & = \inf \left\{ \lambda > 0 \Vt \varrho_{p,\Phi}\left( \frac{D_rf}{\lambda} \right) \le 1 \right\} 
			 = r^{-\frac1p} \inf \left\{ \lambda > 0 \Vt \varrho_{p,\Phi}\left( \frac{D_rf}{r^{-\frac1p}\lambda} \right) \le 1 \right\} \\
			& = r^{-\frac1p} \inf \left\{ \lambda > 0 \Vt \varrho_{p,\Phi}\left( \frac{f}{\lambda} \right) \le 1 \right\} 
			 = r^{-\frac1p} \|f\|_{p,\Phi}.
		\end{alignat*}
	Furthermore, since $\Phi$ is positive and convex on $(0,\infty)$, the function
		\[
			P(t) = \int_0^t \Phi(s) \dss 
		\]
	is well-defined and strictly increasing on $(0,\infty)$, and it satisfies $\lim_{t\to 0_+}P(t)=0$ and $\lim_{t\to\infty} P(t)=\infty$. Hence, there exists a~unique $C_0>0$ satisfying~\eqref{E:fund-C}. Then, for any $t>0$, we have
		\[
			\varrho_{p,\Phi}\left( \frac{\chi_{[0,t)}}{C_0t^{\frac1p}} \right) = \int_0^t \Phi\left( \frac{s^\frac1p}{C_0t^{\frac1p}} \right) \dss 
			= p \int_0^{\frac1{C_0}} \Phi(s) \frac{\ds}{s} =1.
		\]
	Hence, the fundamental function satisfies
		\[
			\phi_{L^{p,\Phi}}(t) = \|\chi_{[0,t)}\|_{p,\Phi} = C_0t^{\frac1p}.
		\]
\end{proof}

The following result is found in \cite[Theorem 3.7]{To}, where it was proved in an even greater generality. Nevertheless, we prove it  again in its particular form which is useful here.

\begin{prop}\label{P:OL-twostar}
	Let $\Phi$ be a Young function satisfying $\Phi\in\Delta_2$, and let $p\in(1,\infty)$. Let $f\in\M([0,\infty)).$ Then
		\begin{equation}\label{E:two-one}
			\int_0^\infty \Phi(t^{\frac1p}\ff(t))\dtt \approx \int_0^\infty \Phi(t^{\frac1p}\f(t))\dtt,
		\end{equation}
	where the equivalence constants are independent of $f$.
\end{prop}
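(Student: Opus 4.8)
The inequality ``$\gtrsim$'' is immediate from \eqref{E:12}: since $\f(t)\le\ff(t)$ for every $t>0$ and $\Phi$ is increasing, $\Phi(t^{\frac1p}\f(t))\le\Phi(t^{\frac1p}\ff(t))$ pointwise, hence $\varrho_{p,\Phi}(f)=\int_0^\infty\Phi(t^{\frac1p}\f(t))\dtt\le\int_0^\infty\Phi(t^{\frac1p}\ff(t))\dtt$. So the whole content is the reverse estimate $\int_0^\infty\Phi(t^{\frac1p}\ff(t))\dtt\lesssim\int_0^\infty\Phi(t^{\frac1p}\f(t))\dtt$, with a constant depending only on $p$ and the $\Delta_2$-constant of $\Phi$.

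For that, the plan is to exploit the structure $\ff(t)=\frac1t\int_0^t\f(s)\ds$, which makes the map $\f\mapsto\ff$ a weighted Hardy-type averaging operator. The substitution $t^{\frac1p}\ff(t)$ suggests changing the variable so that the weight $\dtt$ and the factor $t^{\frac1p}$ are absorbed. Concretely, write $t^{\frac1p}\ff(t)=t^{\frac1p-1}\int_0^t\f(s)\ds=\int_0^t s^{\frac1p}\f(s)\cdot s^{-\frac1p}t^{\frac1p-1}\ds$; setting $F(s)=s^{\frac1p}\f(s)$ and changing variables $s=t\sigma$ in the inner integral gives $t^{\frac1p}\ff(t)=\int_0^1 F(t\sigma)\,\sigma^{-\frac1p}\,\mathrm{d}\sigma$, i.e. $t^{\frac1p}\ff(t)$ is a fixed weighted average (over the probability-like measure $\frac1{p'}\sigma^{-\frac1p}\,\mathrm{d}\sigma$ on $(0,1)$, which has total mass $p$) of the dilates $F(t\sigma)$ of $F$. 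Then, since $\Phi$ is convex, Jensen's inequality applied to this average (after normalizing the measure to have mass $1$, at the cost of a factor handled by the $\Delta_2$ condition, which ensures $\Phi(pu)\le C(p)\Phi(u)$) yields $\Phi(t^{\frac1p}\ff(t))\lesssim\int_0^1\Phi(F(t\sigma))\,\sigma^{-\frac1p}\,\mathrm{d}\sigma$. Integrating against $\dtt$ and using Fubini together with the dilation invariance of $\dtt$ (the substitution $t\mapsto t/\sigma$ gives $\int_0^\infty\Phi(F(t\sigma))\dtt=\int_0^\infty\Phi(F(t))\dtt=\varrho_{p,\Phi}(f)$ for each fixed $\sigma$) produces $\int_0^\infty\Phi(t^{\frac1p}\ff(t))\dtt\lesssim\Big(\int_0^1\sigma^{-\frac1p}\,\mathrm{d}\sigma\Big)\varrho_{p,\Phi}(f)=p'\,\varrho_{p,\Phi}(f)$, which is the desired bound.

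The main obstacle is making the Jensen step legitimate: $\Phi$ is convex but the averaging measure $\sigma^{-\frac1p}\,\mathrm{d}\sigma$ on $(0,1)$ has total mass $p>1$, not $1$, so one cannot apply Jensen directly to $\Phi$. This is precisely where $\Phi\in\Delta_2$ enters: it is equivalent to a doubling-type bound $\Phi(\lambda u)\le C_\lambda\Phi(u)$ for every fixed $\lambda\ge1$ (with $C_\lambda$ depending only on $\lambda$ and the $\Delta_2$-constant), so after normalizing the measure by $1/p$, applying Jensen to the probability measure $\frac1p\sigma^{-\frac1p}\,\mathrm{d}\sigma$, and then restoring the factor $p$ inside $\Phi$ via $\Phi(pu)\le C(p)\,\Phi(u)$, the argument closes. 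I would therefore first record this doubling consequence of $\Delta_2$ as a short preliminary remark, then carry out the change of variables $t^{\frac1p}\ff(t)=\frac1{p'}\int_0^1 F(t\sigma)\sigma^{-\frac1p}\cdot p'\,\mathrm{d}\sigma$ with the correct normalization, apply Jensen plus doubling, and finish with Fubini and the scaling identity for $\dtt$. One minor point to address is integrability/finiteness: the estimate should be stated and proved for all $f$, with both sides allowed to be $+\infty$; when $\varrho_{p,\Phi}(f)=\infty$ there is nothing to prove, and when it is finite all the interchanges of integration are justified by Tonelli since every integrand is nonnegative.
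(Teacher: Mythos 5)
Your proposal is correct and is essentially the same argument as the paper's: both rewrite $t^{\frac1p}\ff(t)$ as an average of $s^{\frac1p}\f(s)$ against a probability measure of total mass normalized by $p'$, apply Jensen's inequality to the convex $\Phi$, absorb the normalization factor via the $\Delta_2$ condition, and finish with Tonelli and the scaling invariance of $\dtt$ (the paper keeps the inner integral over $(0,t)$ rather than substituting $s=t\sigma$, but this is only a change of coordinates). The only slip is that you twice state the total mass of $\sigma^{-\frac1p}\,\mathrm{d}\sigma$ on $(0,1)$ to be $p$ when it is $p'$ (as your own final computation correctly shows); this is harmless since $\Delta_2$ yields $\Phi(\lambda u)\le C_\lambda\Phi(u)$ for any fixed $\lambda\ge1$.
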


\begin{proof}
	The ``$\gtrsim$'' part of \eqref{E:two-one} is obvious thanks to \eqref{E:12}. We will prove ``$\lesssim$''.
	If $C_2>0$ is such a~constant that $\Phi(2x)\le C_2\Phi(x)$, for all $x>0$, one may show that 
		\begin{equation}\label{E:p-const}
			\Phi(p'x) \le C\Phi(x)			
		\end{equation}
	also holds for all $x>0$, with $C = C_2^{\log_2 p'}$. Consider $f\in\M([0,\infty))$ and $t>0$. Then we have
	\begin{alignat*}{1}
		 \Phi( t^{\frac1p} \ff(t) )& = \Phi\left( t^{-\frac1{p'}} \int_0^t \f(s)\ds \right) 
		= \Phi\left( p'\, \frac{t^{-\frac1{p'}}}{p'} \int_0^t s^{\frac1p}\f(s) s^{-\frac1p}\ds \right)\\
		& \le C \Phi\left( \frac{t^{-\frac1{p'}}}{p'} \int_0^t s^{\frac1p}\f(s) s^{-\frac1p}\ds \right) 
		\le C \frac{t^{-\frac1{p'}}}{p'} \int_0^t \Phi( s^{\frac1p}\f(s)) s^{-\frac1p}\ds. 
	\end{alignat*}
	In the second-last step we used \eqref{E:p-const}, and the last step follows from Jensen's inequality, since
		\[
			\frac{t^{-\frac1{p'}}}{p'} \int_0^t s^{-\frac1p}\ds = 1.
		\]
	Hence,
		\begin{alignat*}{1}
			 \int_0^\infty \Phi( t^{\frac1p} \ff(t) ) \dtt &
			\le \frac C{p'} \int_0^\infty t^{-\frac1{p'}-1} \int_0^t \Phi( s^{\frac1p}\f(s)) s^{-\frac1p}\ds \dt\\
			& = \frac C{p'} \int_0^\infty s^{\frac1{p'}-1} \Phi( s^{\frac1p}\f(s)) \int_s^\infty t^{-\frac1{p'}-1}\dt\ds\\
			&= C \int_0^\infty \Phi( s^{\frac1p} \f(s)) \dss. 
		\end{alignat*}
	This completes the proof.
\end{proof}

Up to our knowledge, the problem of mutual embeddings of Orlicz--Lorentz spaces (defined as above and in \cite{To}) has not been treated yet. Hence,  we now proceed to prove a characterization of a particular embedding of this type, which we will need to use afterwards. 

\begin{theorem}\label{T:OL-embedding}
	Let $\Phi,\Psi$ be Young functions and $\Phi,\Psi\in\Delta_2$. Let $p\in(1,\infty)$. Then, the following assertions are equivalent:
		\begin{enumerate}[label={\rm(\roman*)},itemsep=5pt]
			\item 
				$L^{p,\Phi}\subset L^{p,\Psi}$;
			\item 
				$L^{p,\Phi}\hookrightarrow L^{p,\Psi}$;
			\item
				there exist $C,T>0$ such that $\Psi(x)\le \Phi(Cx)$, for all $x\in(0,T]$;
			\item
				$\limsup_{x\to0_+} \frac{\Psi(x)}{\Phi(x)} < \infty.$		
		\end{enumerate}
\end{theorem}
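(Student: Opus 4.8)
The plan is to prove the cycle of implications $\text{(ii)}\Rightarrow\text{(i)}$, $\text{(i)}\Rightarrow\text{(iv)}$, $\text{(iv)}\Rightarrow\text{(iii)}$, and $\text{(iii)}\Rightarrow\text{(ii)}$, exploiting that all four spaces here are r.i.~spaces over $[0,\infty)$ so that set inclusion automatically upgrades to continuous embedding via \cite[Theorem~I.1.8]{BS}. This makes $\text{(i)}\Leftrightarrow\text{(ii)}$ essentially free, and $\text{(iv)}\Rightarrow\text{(iii)}$ is immediate: if $\limsup_{x\to 0_+}\Psi(x)/\Phi(x)=M<\infty$, pick $T>0$ with $\Psi(x)\le (M+1)\Phi(x)$ on $(0,T]$; since $\Phi$ is increasing and convex with $\Phi(0)=0$, one has $(M+1)\Phi(x)\le \Phi(Cx)$ for $C=\lceil M+1\rceil$ (using $\Phi(x)\le \frac1n\Phi(nx)$, a consequence of convexity), possibly shrinking $T$ accordingly. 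So the two substantial implications are $\text{(iii)}\Rightarrow\text{(ii)}$ and $\text{(i)}\Rightarrow\text{(iv)}$.

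For $\text{(iii)}\Rightarrow\text{(ii)}$: assume $\Psi(x)\le\Phi(Cx)$ for $x\in(0,T]$. Take $f\in L^{p,\Phi}$ with $\|f\|_{p,\Phi}=1$, so $\varrho_{p,\Phi}(f)\le 1$ by the $\Delta_2$-assumption and Proposition~\ref{P:OL-space} (which gives $\widetilde{\mathcal L}^{p,\Phi}=L^{p,\Phi}$ and, as usual for $\Delta_2$ Orlicz-type modulars, that the Luxemburg unit ball coincides with $\{\varrho_{p,\Phi}\le 1\}$). I want to bound $\varrho_{p,\Psi}(f/\lambda)$ for a suitable $\lambda$. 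Split the integral defining $\varrho_{p,\Psi}(f/\lambda)$ according to whether $t^{1/p}\ff(t)/\lambda \le T$ or $>T$. On the first region, $\Psi(t^{1/p}\ff(t)/\lambda)\le \Phi(Ct^{1/p}\ff(t)/\lambda)$; choosing $\lambda=C$ and using $\Delta_2$ once more (so that $\Phi(t^{1/p}\ff(t))\le C'\varrho_{p,\Phi}$-integrand after absorbing the constant — more precisely $\Phi(x)\le$ const$\cdot\Phi(x)$ trivially, but we need $\varrho_{p,\Phi}(f/C)\lesssim\varrho_{p,\Phi}(f)$, which follows by iterating $\Delta_2$), this part is $\lesssim \varrho_{p,\Phi}(f)\le 1$. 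On the region $t^{1/p}\ff(t)>T\lambda$, the $\Delta_2$ condition on $\Psi$ combined with the fact that $\Psi(y)\le \Psi(T)\cdot(\text{const})^{\log_2(y/T)+1}\lesssim (y/T)^{\beta}$ for large $y$ — actually cleaner: $\Delta_2$ gives $\Psi(y)\lesssim \Phi(Cy)$ there too since $y/\lambda$ exceeds $T$ only when $y$ is correspondingly large and we can compare $\Psi(y)$ to $\Psi(y)\le \Psi(2^k T)\le C_2^k\Psi(T)$ against $\Phi$ on $(0,T]$ scaled up — so I should instead just directly note that after fixing $\lambda$ large enough (depending only on $\Phi,\Psi,C,T$, not on $f$), the set where $t^{1/p}\ff(t)/\lambda > T$ forces $\ff(t)$ large, and use that on the sublevel set $\{\ff(t)^p t>(\lambda T)^p\}$ one has finite $\varrho_{p,\Phi}$-mass, hence finitely many dyadic ``shells'', on each of which $\Delta_2$ yields a comparison constant; summing gives a uniform bound. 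The cleanest route is: choose $\lambda$ so that $\varrho_{p,\Psi}(f/\lambda)\le 1$ follows from $\varrho_{p,\Phi}(f)\le 1$ using both $\Delta_2$-conditions to pass from the near-zero regime (where (iii) applies verbatim) through a bounded number of doublings to the full range. Then $\|f\|_{p,\Psi}\le\lambda=\lambda\|f\|_{p,\Phi}$, and homogeneity in $f$ finishes it.

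For $\text{(i)}\Rightarrow\text{(iv)}$, the contrapositive is the efficient choice: suppose $\limsup_{x\to 0_+}\Psi(x)/\Phi(x)=\infty$. Then there is a sequence $x_k\downarrow 0$ with $\Psi(x_k)\ge 2^k\Phi(x_k)$, and WLOG $x_k$ decreasing fast and $x_k\le 1$. I construct $f$ via its rearrangement: on disjoint intervals $I_k$ (say $I_k=[a_k,a_k+\ell_k)$ pushed out near $0$ or arranged so the level sets work), set $\f$ so that the ``profile'' $t^{1/p}\f(t)$ takes the value $x_k$ on a set contributing mass $c_k/\Psi(x_k)$ to $\varrho_{p,\Psi}$, with $\sum c_k$ chosen so $\sum c_k<\infty$ but $\sum 2^k c_k/(\text{something})=\infty$; concretely, let the contribution to $\varrho_{p,\Phi}$ from shell $k$ be $\sim c_k/2^k$ (summable) while the contribution to $\varrho_{p,\Psi}$ is $\sim c_k$. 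Picking $c_k=1/k^2$ gives $\varrho_{p,\Phi}(f)<\infty$ but, since $\varrho_{p,\Psi}(f/\lambda)\ge$ (for any fixed $\lambda$, using that only finitely many shells have $x_k/\lambda$ out of the bad-ratio range and $\Delta_2$ controls the rest downward by a bounded factor) $\gtrsim \sum_k c_k/\text{const} = \infty$, so $f\notin L^{p,\Psi}$ while $f\in \widetilde{\mathcal L}^{p,\Phi}=L^{p,\Phi}$. Hence $L^{p,\Phi}\not\subset L^{p,\Psi}$. The arrangement of the level sets of $f$ so that $\f$ is genuinely nonincreasing and the modular integrals over $\mathrm dt/t$ split as claimed is the delicate bookkeeping here.

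The main obstacle I anticipate is the quantitative handling of the ``large-argument'' regime in $\text{(iii)}\Rightarrow\text{(ii)}$: condition (iii) is only a pointwise comparison near $0$, so transferring it to a modular inequality requires that the $\Delta_2$ conditions on \emph{both} $\Phi$ and $\Psi$ let us absorb the contribution from $\{t^{1/p}\ff(t)>T\}$ into a constant multiple of $\varrho_{p,\Phi}(f)$; executing this cleanly (choosing $\lambda$ uniformly, iterating $\Delta_2$ the right number of times as in the proof of Proposition~\ref{P:OL-twostar} where a similar constant $C_2^{\log_2 p'}$ appears) is where care is needed. Dually, in the construction for $\text{(i)}\Rightarrow\text{(iv)}$, ensuring $f\in L^{p,\Phi}$ (not merely in the class — but here $\Phi\in\Delta_2$ makes the class equal the space by Proposition~\ref{P:OL-space}) while $\varrho_{p,\Psi}(f/\lambda)=\infty$ for every $\lambda>0$ simultaneously is the crux, and it is exactly the $\Delta_2$ condition on $\Psi$ that makes ``$\varrho_{p,\Psi}(f/\lambda)=\infty$ for one $\lambda$'' equivalent to ``for all $\lambda$'', hence to $f\notin L^{p,\Psi}$.
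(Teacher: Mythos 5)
Your overall architecture matches the paper's (contrapositive construction for (i)$\Rightarrow$(iv), convexity for (iv)$\Rightarrow$(iii), a modular estimate for (iii)$\Rightarrow$(ii), and the free upgrade (i)$\Leftrightarrow$(ii) for r.i.~spaces), but the two steps you yourself flag as delicate do not close as written. The serious gap is in (iii)$\Rightarrow$(ii): condition (iii) compares $\Psi$ and $\Phi$ only on $(0,T]$, and none of the three routes you sketch for the region $\{t \vt t^{1/p}\ff(t)/\lambda>T\}$ is actually carried out. The dyadic-shell idea does not close as stated, because $\Delta_2$ gives $\Psi(2^kT)\le C_2^k\Psi(T)$ with a constant growing geometrically in $k$, and ``finite $\varrho_{p,\Phi}$-mass, hence finitely many shells'' is not a uniform bound in $f$ unless you first prove a sup-estimate. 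The missing ingredient is precisely the weak-type bound $C_0\sup_{t>0}t^{1/p}\ff(t)\le\|f\|_{p,\Phi}$, which follows from the fundamental function $\phi_{L^{p,\Phi}}(t)=C_0t^{1/p}$ computed in Theorem~\ref{T:OL-hmg} together with the Marcinkiewicz endpoint embedding \cite[Proposition~II.5.9]{BS}. With it, choosing $\lambda>\|f\|_{p,\Phi}\max\{(C_0T)^{-1},C\}$ forces $t^{1/p}\ff(t)/\lambda\le T$ for \emph{every} $t>0$, so (iii) applies on the whole half-line, the large-argument regime is empty, and no $\Delta_2$ iteration is needed at all; this is how the paper argues. (Your route is salvageable: $\varrho_{p,\Phi}(f)\le1$ does force $\Phi(M/2)\log 2\le1$ for $M=\sup_t t^{1/p}\ff(t)$, since $t\ff(t)$ is nondecreasing, but that is exactly a proof of the sup-bound and you would still have to state and prove it.)

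In (i)$\Rightarrow$(iv) the construction is the right one, but your concrete choice $c_k=1/k^2$ defeats it: with the shell-$k$ contribution to $\varrho_{p,\Psi}$ equal to $c_k$ and the contribution to $\varrho_{p,\Phi}$ at most $c_k2^{-k}$, you need $\sum_k c_k=\infty$ to exclude $f$ from $L^{p,\Psi}$ and $\sum_k c_k2^{-k}<\infty$ to keep it in $L^{p,\Phi}$; with $c_k=1/k^2$ both sums converge and $f$ lies in both spaces. Take $c_k=1/k$ (the paper normalizes the $\Phi$-contribution to $j^{-2}$ against a ratio $\Psi(x_j)>j\Phi(x_j)$, so the $\Psi$-contribution exceeds $1/j$). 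The ``delicate bookkeeping'' you defer is also where the remaining content sits: one must exhibit a genuinely nonincreasing representative, which the paper does via $f(t)=\chi_{[0,1)}(t)+t^{-1/p}\sum_jx_j\chi_{(a_j,a_{j+1}]}(t)$ with $x_j\le1$ decreasing and $a_1=1$, and one must pass from $\f$ to $\ff$ inside the modular via Proposition~\ref{P:OL-twostar} (this is where $\Phi\in\Delta_2$ enters), since the membership criteria are stated in terms of $\ff$. On the positive side, your remark that $\Delta_2$ for $\Psi$ makes ``$\varrho_{p,\Psi}(f)=\infty$'' equivalent to ``$f\notin L^{p,\Psi}$'' is correct and is the same device the paper uses, and your (iv)$\Rightarrow$(iii) via $n\Phi(x)\le\Phi(nx)$ is exactly the paper's argument.
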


\begin{proof}
	``(i)$\Rightarrow$(iv)''. Assume that (iv) is not true, which means that for every $j\in\N$ and $T>0$ there exists $x\in(0,T]$ such that $\Psi(x)>j\Phi(x)$. 
	Set $a_1=x_1=1$. Now, it is possible to find $a_2>a_1$ such that
		\[
			\Phi(x_1)\int_{a_1}^{a_2} \dtt= 1.
		\]
	We proceed inductively. Given $j\in\N$, $j>1$ and points $a_{j}$ and $x_{j-1}$, by the initial assumption (with $T=x_{j-1}$) we may find $x_j\in(0,x_{j-1}]$ such that
		\begin{equation}\label{E:E1}
			\Psi(x_{j}) > j \Phi(x_j).
		\end{equation}
	Then, we choose $a_{j+1}>a_j$ such that
		\begin{equation}\label{E:E2}
			\Phi(x_j)\int_{a_j}^{a_{j+1}} \dtt = \frac1{j^2}.
		\end{equation}
	In this way we obtain an increasing positive sequence $(a_j)_{j\in\N}$ and a~nonincreasing positive sequence $(x_j)_{j\in\N}$ such that $\eqref{E:E1}$ and $\eqref{E:E2}$ hold for every $j\in\N$.
	
	For every $t>0$ define
		\[
			f(t) =  \chi_{[0,1)}(t) + t^{-\frac1p}\sum_{j\in\N} x_j\chi_{(a_j,a_{j+1}]}(t).
		\]
	Thanks to the properties of $(x_j)_{j\in\N}$, the function $f$ is decreasing and thus $\f(t)=f(t)$, for all $t>0$. 
	
	Since $\Phi$ is convex and $\Phi(0)=0$, the term $\frac{\Phi(s)}s$ is nondecreasing with respect to~$s$. Thus, for any $s\in(0,1]$ we have $\Phi(s)\le \Phi(1)s$. We use this observation together with \eqref{E:E2} to obtain
		\begin{alignat*}{1}
			\int_0^\infty \Phi(t^{\frac1p}\f(t))\dtt & = \int_0^1 \Phi(t^{\frac 1p}) \dtt + \sum_{j\in\N} \Phi(x_j) \int_{a_j}^{a_{j+1}}  \dtt \\ 
							&	\le \Phi(1) \int_0^1 t^{\frac1p - 1} \dt + \sum_{j\in\N} \frac{1}{j^2} < \infty.
		\end{alignat*}
	Since $\Phi\in\Delta_2$, Propositions~\ref{P:OL-space} and \ref{P:OL-twostar} yield $f\in\widetilde{\mathcal L}^{p,\Phi}=L^{p,\Phi}$.
	However, by \eqref{E:E1} and \eqref{E:E2} we also get
		\begin{alignat*}{1}
			\int_0^\infty \Psi(t^{\frac1p}\f(t))\dtt & > \sum_{j\in\N} \Psi(x_j) \int_{a_j}^{a_{j+1}}  \dtt 
			 > \sum_{j\in\N} j\Phi(x_j) \int_{a_j}^{a_{j+1}}  \dtt = \sum_{j\in\N} \frac{1}{j} = \infty.
		\end{alignat*}
	By Propositions~\ref{P:OL-space} and \ref{P:OL-twostar} again, we infer $f\not\in\widetilde{\mathcal L}^{p,\Psi}=L^{p,\Psi}$. Therefore, $f\in L^{p,\Phi}\setminus L^{p,\Psi}$, hence (i) is not true.
	\medskip
    
	``(iv)$\Rightarrow$(iii)''. Suppose that (iv) holds. Hence, there exist constants $C,T>0$ such that $\Psi(x)\le C\Phi(x)$, for all $x\in(0,T]$. Without loss of generality, we may assume that $C\ge 1$. Then, by convexity of $\Phi$, for any $x\in(0,T]$ we have $C\Phi(x)\le \Phi(Cx)$, and hence (iii) is satisfied.
		\medskip
    
	``(iii)$\Rightarrow$(ii)''. Suppose that (iii) holds. Let $f\in L^{p,\Phi}$. By Theorem~\ref{T:OL-hmg} and \cite[Proposition II.5.9]{BS}, we have
		\begin{equation*}\label{E:embinf}
			\sup_{t>0} C_0\, t^{\frac1p}\ff(t) \le \|f\|_{p,\Phi},
		\end{equation*}
	where $C_0$ satisfies \eqref{E:fund-C}. Let $\lambda > \|f\|_{p,\Phi}\max\left\{ \frac1{C_0T}, C\right\}.$ Then, for all $t>0$, we have
		\[
			\frac{ t^{\frac1p}\ff(t) }\lambda \le \frac{C_0 T\, t^{\frac1p}\ff(t) }{ \|f\|_{p,\Phi} } \le \frac{T\, t^{\frac1p}\ff(t) }{ \sup_{s>0} s^{\frac1p}\ff(s) } \le T.
		\]
	Therefore
		\[
			\int_0^\infty \Psi\left( \frac{ t^{\frac1p}\ff(t) }\lambda \right) \dtt \le \int_0^\infty \Phi\left( \frac{ C t^{\frac1p}\ff(t) }\lambda \right) \dtt \le 1.
		\]
	Observe that the second estimate follows  assuming $\lambda> C\|f\|_{p,\Phi}$. Hence, we obtain that
		\[
			 \|f\|_{p,\Psi} \le \|f\|_{p,\Phi}\max\left\{ \frac1{C_0T}, C\right\},
		\]
	and thus (ii) holds.
		\medskip
    
	The remaining implication ``(ii)$\Rightarrow$(i)'' is obvious.
\end{proof}

It is possible to find a~Young function~$\Phi$ such that the Orlicz--Lorentz space $L^{p,\Phi}$ is not equal to an $L^{p,q}$ space for any $q\in[1,\infty]$. In the following proposition we are going to show that $\Phi$ may even be chosen so that $L^{p,\Phi}$ becomes incomparable to $L^{p,q}$ for any $q$ from an open interval (this should be compared with Proposition~\ref{nonlorentz}). The example we are using below appears e.g.~in \cite{FS} in a~slightly different but related context. 

\begin{prop}\label{P:incomp-OL}
	Let $p\in(1,\infty)$. There exist numbers $a, b>0$ such that the function
		\[
			\Phi(x) = \begin{cases}
                0, & x=0;\\
                x^{4 + \sin\log(-\log x)}, & x\in(0,\e^{-1}];\\
                ax - b, & x> \e^{-1},
            \end{cases}
		\]
	is a Young function satisfying the $\Delta_2$ condition. Moreover, $ L^{p,\Phi} $ is a $p$-homogeneous r.i.\ space and for any $q\in(3,5)$ it holds that
		\begin{equation}\label{E:nonemb}
			L^{p,q} \not\subset L^{p,\Phi} \not\subset L^{p,q}.
		\end{equation}
\end{prop}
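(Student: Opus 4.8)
The plan is to verify that $\Phi$ is a Young function in $\Delta_2$ for a suitable choice of $a,b$, and then use Theorem~\ref{T:OL-embedding} to reduce each non-embedding in \eqref{E:nonemb} to an asymptotic statement about $\Phi(x)/x^q$ as $x\to0_+$.

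\smallskip

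\textbf{Step 1: $\Phi$ is a Young function in $\Delta_2$.} On $(0,\e^{-1}]$ write $\Phi(x)=x^{\alpha(x)}$ with $\alpha(x)=4+\sin\log(-\log x)$, so $4-1\le\alpha(x)\le 4+1$. Put $u=-\log x\in[1,\infty)$, so $\Phi=\exp(-u(4+\sin\log u))$. Differentiating twice in $u$ and checking signs shows $\Phi$ is increasing (as a function of $x$) and convex on $(0,\e^{-1}]$; the bounded oscillation of $\alpha$ and its derivative (note $\frac{\mathrm d}{\mathrm du}\sin\log u=\frac{\cos\log u}u\to0$) guarantees the dominant term keeps the correct sign, so for $u$ large enough---equivalently $x$ small enough---convexity holds, and by shrinking the interval or rescaling one arranges it on all of $(0,\e^{-1}]$. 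Then choose the slope $a=\Phi'(\e^{-1}{}^-)$ and intercept $b$ so that the affine piece $ax-b$ matches $\Phi$ and $\Phi'$ at $x=\e^{-1}$; convexity across the junction and $\Phi(0)=0$ follow. The $\Delta_2$ condition is immediate on the affine part; near $0$, $\Phi(2x)/\Phi(x)=2^{\alpha(2x)}x^{\alpha(2x)-\alpha(x)}$, and since $|\alpha(2x)-\alpha(x)|\lesssim 1/(-\log x)\to0$ and $\alpha$ is bounded, this ratio stays bounded. By Theorem~\ref{T:OL-hmg}, $L^{p,\Phi}$ is then a $p$-homogeneous r.i.\ space.

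\smallskip

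\textbf{Step 2: translate \eqref{E:nonemb} via Theorem~\ref{T:OL-embedding}.} Taking $\Psi(x)=x^q$, the theorem says $L^{p,q}\subset L^{p,\Phi}$ iff $\limsup_{x\to0_+}x^q/\Phi(x)<\infty$, i.e.\ iff $\Phi(x)\gtrsim x^q$ near $0$; and $L^{p,\Phi}\subset L^{p,q}$ iff $\limsup_{x\to0_+}\Phi(x)/x^q<\infty$, i.e.\ iff $\Phi(x)\lesssim x^q$ near $0$. So \eqref{E:nonemb} is equivalent to: $\Phi(x)/x^q$ is neither bounded above nor bounded below near $0$. With $x=\e^{-u}$ we have $\Phi(x)/x^q=\exp\bigl(-u(4+\sin\log u-q)\bigr)=\exp\bigl(-u\,g(u)\bigr)$ where $g(u)=(4-q)+\sin\log u$. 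For $q\in(3,5)$ the constant $4-q$ lies in $(-1,1)$, so $g(u)$ oscillates between $(4-q)-1<0$ and $(4-q)+1>0$, taking both signs on arbitrarily large $u$-intervals (since $\log u\to\infty$ sweeps all arguments of $\sin$). On a sequence $u_k\to\infty$ with $g(u_k)$ bounded below by a positive constant, $\Phi(x)/x^q\to0$; on a sequence with $g(u_k)$ bounded above by a negative constant, $\Phi(x)/x^q\to\infty$. Hence both $\limsup$ and $\liminf$ fail to be finite/positive in the required way, giving \eqref{E:nonemb}.

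\smallskip

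\textbf{Main obstacle.} The only genuinely delicate point is Step~1: verifying that $x\mapsto x^{4+\sin\log(-\log x)}$ is convex near the origin. A clean way is to compute $\Phi''$ in the variable $u=-\log x$ and show the leading behaviour is governed by $g(u)^2\ge 0$ plus lower-order oscillatory terms that are $o(1)$ relative to it except where $g(u)$ has a simple zero, at which points one checks the sign by hand using $g'(u)=\frac{\cos\log u}{u}$. Since the statement only needs $\Phi$ to agree with this formula on some interval $(0,\e^{-1}]$ and we are free to pick the breakpoint and to pass to an equivalent Young function, any minor failure of convexity at an isolated bad radius can be absorbed by taking the convex hull, which does not affect the asymptotics in Step~2. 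Everything else is a routine application of the already-proved Theorem~\ref{T:OL-embedding} and Theorem~\ref{T:OL-hmg}.
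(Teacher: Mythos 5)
Your overall strategy coincides with the paper's: verify directly that $\Phi$ is a convex, increasing $\Delta_2$ function, then feed $\Psi(x)=x^q$ into Theorem~\ref{T:OL-embedding} and observe that the exponent $4-q+\sin\log(-\log x)$ takes both signs along sequences tending to $0$, so that both $\limsup_{x\to0_+}\Phi(x)/x^q$ and $\limsup_{x\to0_+}x^q/\Phi(x)$ are infinite. Your Step~2 is correct and is exactly the paper's argument, and your $\Delta_2$ verification via $|\alpha(2x)-\alpha(x)|\lesssim 1/(-\log x)$ is a legitimate alternative to the paper's use of the criterion $x\Phi'(x)/\Phi(x)\le 6$.

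The gap is in Step~1, and you have flagged it yourself. You only assert convexity ``for $x$ small enough'' and then propose to repair any failure either by shrinking the interval or by taking the convex hull. Neither repair proves the proposition as stated: the breakpoint $\e^{-1}$ and the formula $x^{4+\sin\log(-\log x)}$ on $(0,\e^{-1}]$ are part of the statement, so you are not free to move the breakpoint or to replace $\Phi$ by a convex minorant (and the latter would in any case require a separate argument that the minorant is equivalent to $\Phi$ near $0$, which you do not supply). Your heuristic that $\Phi''$ is ``governed by $g(u)^2$'' with $g(u)=(4-q)+\sin\log u$ also cannot be right, since $\Phi''$ does not depend on $q$. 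The honest computation, with $\ell(x)=\log(-\log x)$, gives
\[
\Phi''(x)=\frac{\Phi(x)}{x^2}\left(13+\sin 2\ell(x)+\frac{\cos\ell(x)-\sin\ell(x)}{\log x}+7\bigl(\cos\ell(x)+\sin\ell(x)\bigr)\right)\ge\frac{\Phi(x)}{x^2}\,\bigl(12-8\sqrt2\bigr)>0
\]
for all $x\in(0,\e^{-1})$, because $|7(\cos\ell(x)+\sin\ell(x))|\le 7\sqrt2$, $|\sin 2\ell(x)|\le 1$ and $|\log x|\ge 1$ there. So convexity holds uniformly on the whole interval, there are no ``bad radii'', and no convex-hull patch is needed; likewise $\Phi'(x)=\frac{\Phi(x)}{x}(4+\cos\ell(x)+\sin\ell(x))\ge\frac{2\Phi(x)}{x}>0$. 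With $a=\Phi'(\e^{-1})$ and $b=a\e^{-1}-\Phi(\e^{-1})$ the affine piece matches value and slope at the junction, and the rest of your argument goes through.
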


\begin{proof}
	By elementary means one shows that $\Phi$ is right-continuous at zero and positive and continuous on $(0,\e^{-1}]$. For simplicity, let us denote $\ell(x)=\log(-\log(x)).$ For $x\in(0,\e^{-1})$, the first derivative of $\Phi$ satisfies
		\[
			\Phi'(x) = \frac{\Phi(x)}{x} ( 4 + \cos \ell(x) + \sin \ell(x) ) \ge \frac{2\Phi(x)}{x} > 0,
		\]
    and the second derivative is expressed as
		\begin{alignat*}{1}
			\Phi''(x) & = \frac{\Phi(x)}{x^2} \left( 13 + \sin 2\ell(x) + \frac{\cos \ell(x) - \sin\ell(x)}{\log x} + 7 (\cos\ell(x) + \sin\ell(x)) \right)\\
			& \ge \frac{\Phi(x)}{x^2} \left( 12 - 7\sqrt 2 + \frac{\sqrt2}{\log x} \right) \ge \frac{\Phi(x)}{x^2} ( 12 - 8\sqrt 2 ) > 0.
		\end{alignat*}
	By this and the continuity, the function $\Phi$ is increasing and strictly convex on $[0,\e^{-1}]$. Then, setting $a=\Phi'(\e^{-1})$ and $b=\Phi'(\e^{-1})\e^{-1}-\Phi(\e^{-1})$ makes $\Phi$ convex and increasing on the whole $(0,\infty)$, hence a Young function since $\Phi(0)=0$ also holds.
	
	In addition to this, the first derivative $\Phi'(x)$ then exists at each point $x>0$. For $x\in(0,\e^{-1}]$, one has
		\[
			\frac{x\Phi'(x)}{\Phi(x)} =  4 + \cos \ell(x) + \sin \ell(x) \le 6.
		\]
	Then, for $x>\e^{-1}$ we have
		\[
			\frac{x\Phi'(x)}{\Phi(x)} = \frac{ax}{ax-b} \le \frac{ a }{ a -b\e } = \frac{\e^{-1}\Phi'(\e^{-1})}{ \Phi(\e^{-1})} \le 6.
		\]
	Thus, by \cite[Theorem 4.1]{KR}, we get $\Phi\in\Delta_2$.
	
	Assume that $q\in(3,5)$. It remains to show \eqref{E:nonemb}. Notice that, if $\Psi(x)=x^q$, then $L^{p,q}=L^{p,\Psi}$ (with equivalent norms) and $\Psi\in\Delta_2$. Since $q\in(3,5)$, we have
		\[
			\limsup_{x\to 0_+} \frac{\Phi(x)}{\Psi(x)} = \limsup_{x\to 0_+} x^{4-q+\sin\log(-\log x)} = \infty
		\]
	as well as 
		\[
			\limsup_{x\to 0_+} \frac{\Psi(x)}{\Phi(x)} = \limsup_{x\to 0_+} x^{q-4-\sin\log(-\log x)} = \infty.
		\]
	Therefore, Theorem~\ref{T:OL-embedding} yields $L^{p,\Psi}\not\subset L^{p,\Phi} \not\subset L^{p,\Psi}$, which is \eqref{E:nonemb}. That $ L^{p,\Phi} $ is a $p$-homogeneous r.i. space follows from Theorem~\ref{T:OL-hmg}.
\end{proof}

\end{document}